\documentclass[12pt]{amsart}
\usepackage[utf8]{inputenc}
\usepackage{amsfonts}
\usepackage{amssymb}
\usepackage{amsmath}
\usepackage[english]{babel}
\usepackage{amsthm}
\theoremstyle{plain}
\newtheorem{theorem}{Theorem}[section]
\newtheorem{corollary}[theorem]{Corollary}
\newtheorem{lemma}[theorem]{Lemma}
\newtheorem{proposition}[theorem]{Proposition}
\theoremstyle{definition}
\newtheorem{definition}[theorem]{Definition}
\newtheorem{remark}[theorem]{Remark}
\usepackage{geometry}
\geometry{a4paper,scale=0.8}
\numberwithin{equation}{section} 
\usepackage{tikz}
\usepackage{caption}
\usepackage{graphicx}

\usepackage[colorlinks, citecolor=blue,pagebackref,hypertexnames=false]{hyperref}


\newcounter{comcount}

\begin{document}
\title{ END POINT ESTIMATES FOR RIESZ TRANSFORM and Hardy-Hilbert type inequalities} 
\date{}
\author{ Dangyang He}  
\maketitle

\begin{abstract}
     We consider a class of non-doubling manifolds $\mathcal{M}$ defined by taking connected sum of finite Riemannian manifolds with dimension N which has the form $\mathbb{R}^{n_i}\times \mathcal{M}_i$ and the Euclidean dimension $n_i$ are not necessarily all the same. In \cite{HaSi}, Hassell and Sikora proved  that  the  Riesz transform on $\mathcal{M}$  is weak type $(1,1)$,  bounded on $L^{p}(\mathcal{M})$ for all $1<p<n^*$ where $n^*  = \min_k n_k$ and is unbounded for $p \ge n^*$.
     In this note we show that the Riesz transform is bounded from Lorentz space $L^{n^* ,1}(\mathcal{M})$ to $L^{n^*,1}(\mathcal{M})$. This complete the picture by obtaining the end point results for $p=n^*$.

     Our approach is based on parametrix construction described in \cite{HaSi} and a generalisation of Hardy-Hilbert type inequalities first studied by Hardy, Littlewood and P\'olya.
\end{abstract}

\tableofcontents
\section{Introduction}

Riesz transform is an important area of Harmonic Analysis. It provides answer to a very natural and significant questions. Arguably it is also an origin of the whole modern development of Harmonic Analysis especially the theory of singular integrals, which Riesz transform is par excellence example. To explain the significance of Riesz transform question we start with some simple comparison of the two-dimensional wave and Laplace equations. For any two functions $f,g$ which map $\mathbb{R}$
to itself the corresponding function $F(x,t)=f(x-t)+g(x+t)$
satisfy the wave equation $\partial_t^2F-\partial_x^2F=0$ as it is present in D'Alembert's Formula. As a consequence, potential solutions do not have any regularity and the components 
$\partial_t^2F$ and $\partial_x^2F$ are not necessarily meaningful. 
On the other hand, for Laplace equation if $\partial_t^2F+\partial_x^2F=u$ for some $u\in L^p(\mathbb{R})$
then the boundedness  of the second-order Riesz transforms 
$\partial_t^2/(\partial_t^2+\partial_x^2)$ and 
$\partial_x^2/(\partial_t^2+\partial_x^2)$ implies that both derivatives $\partial_t^2F$ and $\partial_x^2F$
are meaningful and in fact they belong to the same $L^p$
space for all $1<p<\infty$. It is one of the most remarkable and significant observations in the theory of Partial differential equations, especially surprising in the above comparison with the wave equation.  
For $p=2$ the boundedness of the Riesz transform of any order is a direct consequence of the Plancherel's Theorem and theory of Lebesgue integration. Whereas the results for $p\neq 2$ opened a new chapter in modern Harmonic analysis called singular integrals. 

The first order Riesz transform is defined as 
\begin{equation*}
    R_i = \frac{\partial_i}{\sqrt{\Delta}} \quad 1\le i\le n
\end{equation*}
where $\Delta = -\sum_{i=1}^n \partial_i^2$ is the standard Laplacian on $\mathbf{R}^n$. One of applications of the study of Riesz transform is the definition of Sobolev spaces. In a simple example, the continuity of $R_i$ on $L^p$ spaces shows that the following two natural definitions of homogeneous Sobolev spaces are equivalent:
\begin{equation*}
    \|f\|_{\Dot{W}^{1,p}} = \|\Delta^{1/2}f\|_p \sim \sum_{i=1}^n \|\partial_i f\|_p \quad 1<p<\infty
\end{equation*}

The first momentum of the theory of the Riesz transform 
comes from its one-dimension version, Hilbert transform, which appears in the way:
\begin{equation*}
    H(f) = \lim_{y\to 0}(f*Q_y)
\end{equation*}
where $Q_y$ is the conjugate Poisson kernel and the limit is taking in $L^2$ \cite[Proposition 3.1]{SS}, and the proof of its boundedness was described by Riesz \cite{15} based on complex analysis. In higher dimensional setting, the extension of the $L^p$ boundedness result of Riesz transform was described in the way of Calderón–Zygmund decomposition \cite{S}.

In \cite{16}, Strichartz initiated the project to extend the classical results known for the standard Laplace operator to the setting of Laplace-Beltrami operator on the complete Riemannian manifolds. Such generalizations hold automatically for some type of results but in the case of Riesz transform, the question is still not completely solved. Consider the Riesz transform on a complete Riemannian manifold equipped with some measure satisfying doubling condition. That is
\begin{equation}
    \mu(B(x,2r))\le C \mu(B(x,r)) \quad \forall x,r
\end{equation}
where $B(x,r)$ is the geodesic ball centered at $x$ with radius $r$. Coulhon and Duong proved that the Riesz transform on this kind of manifold is bounded in $L^p$ for $1<p\le 2$ and of weak type $(1,1)$, the assumptions and details can be found in \cite{6}.

We also consider the case where the doubling condition fails. In \cite{Ca, CaCo, GrSC, HaSi} authors focus on a special class of manifolds which is called the connected sums of products of Euclidean spaces and compact manifolds. The main settings can be expressed as follows. Let $\mathcal{M}$ be a manifold of connected sum in the form
\begin{equation*}
    \mathcal{M} = (\mathbb{R}^{n_1} \times \mathcal{M}_1)\# \dots \#(\mathbb{R}^{n_l} \times \mathcal{M}_l)
\end{equation*}
with $l\ge 2$, $n_i\ge 3$ and each $\mathcal{M}_i$ is a compact manifold. Also, let $N = n_i + \mathop{dim}\mathcal{M}_i$ to be the topological dimension of $\mathcal{M}$. Denote by $\Delta$ the Laplace-Beltrami operator, $\nabla$ the gradient on $\mathcal{M}$.  The Riesz transform on $\mathcal{M}$ is then defined as
\begin{equation*}
    R = \nabla \Delta^{-\frac{1}{2}}
\end{equation*}
then the main question is to determine for which exponent $p$, the Riesz transform $R$ is bounded from $L^{p}(\mathcal{M})$ to $L^{p}(\mathcal{M})$. In \cite{HaSi}, Hassell and Sikora proved the following theorem.

\begin{theorem}\label{thmHA}
Suppose that $\mathcal{M} = (\mathbb{R}^{n_1} \times \mathcal{M}_1)\# \dots \#(\mathbb{R}^{n_l} \times \mathcal{M}_l)$ is a manifold with $l \ge  2$ ends, and $n_i\ge 3$ for each $i$. Then the Riesz transform $\nabla \Delta^{-\frac{1}{2}}$ defined on $\mathcal{M}$ is bounded on $L^p(\mathcal{M})$ if and only if $1 < p < \min \{n_1, \dots, n_l\}$. That is,
there exists $C>0$ such that
\begin{equation*}
    \|\nabla \Delta^{-\frac{1}{2}}(f)\|_p \le C\|f\|_p
\end{equation*}
if and only if $1 < p < n^*$ where $n^* = \min \{n_1, \dots, n_l\}$. In addition the Riesz transform $\nabla \Delta^{-\frac{1}{2}}$ is of
weak type (1,1).
\end{theorem}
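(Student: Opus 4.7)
The plan is to exploit the end structure of $\mathcal{M}$ by constructing a parametrix for $\Delta^{-1/2}$ and then analyzing the Schwartz kernel of $R = \nabla \Delta^{-1/2}$. Outside a compact central region $K \subset \mathcal{M}$ the manifold decomposes into ends $E_i = \mathbb{R}^{n_i} \times \mathcal{M}_i$, and on each individual $E_i$ the Riesz transform is $L^p$-bounded for all $1<p<\infty$ by combining standard Euclidean theory with spectral decomposition on the compact factor $\mathcal{M}_i$. So the genuine difficulty is purely global: how do the ends interact through the gluing region?

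First I would fix a smooth partition of unity $\{\chi_0, \chi_1, \dots, \chi_l\}$ with $\chi_0$ supported near $K$ and each $\chi_i$ ($i \ge 1$) supported in end $E_i$, and decompose $R = \sum_{i,j} \chi_i R \chi_j$. The diagonal pieces $\chi_i R \chi_i$ for $i \ge 1$ reduce to the model Riesz transform on $E_i$ modulo compactly supported corrections, and the local piece involving $\chi_0$ is handled by Coulhon--Duong \cite{6}, since a neighborhood of $K$ is locally doubling with Gaussian heat kernel bounds. The genuinely new content is in the off-diagonal pieces $\chi_i R \chi_j$ with $i \neq j$, where source and observation points sit on distinct ends.

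For these cross-end pieces I would analyze $K_R(x,y) = \nabla_x G(x,y)$ via the subordination formula $\Delta^{-1/2} = c\int_0^\infty e^{-t\Delta}t^{-1/2}\,dt$ together with precise heat kernel bounds on $\mathcal{M}$. The core estimate is that for $x$ far along $E_j$ and $y$ far along $E_i$ with $i \neq j$, $|\nabla_x G(x,y)| \lesssim |x|^{1-n_j}|y|^{2-n_i}$ up to constants depending on $K$. Inserting this into a Schur- or Hardy--Hilbert-type estimate, convergence of $\int_{E_j}|x|^{(1-n_j)p}\,dx$ forces $p<n_j$, and intersecting over all ends yields the critical range $1<p<n^*$. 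For weak $(1,1)$ I would apply a Calder\'on--Zygmund decomposition adapted to the locally doubling manifold $\mathcal{M}$, using the same off-diagonal kernel bounds to control the bad part.

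The main obstacle, I expect, is making the Green's function asymptotic $|\nabla_x G(x,y)| \lesssim |x|^{1-n_j}|y|^{2-n_i}$ rigorous across the gluing region: this requires a matched-asymptotics construction of an approximate resolvent by carefully patching the model resolvents on each $E_i$ over $K$, then bootstrapping via Neumann series to recover the true kernel. For the converse unboundedness at $p \ge n^*$, I would choose $f$ concentrated on the end of minimal dimension $n^*$ and exploit the pointwise lower bound $|Rf(x)| \gtrsim |x|^{1-n^*}$ along that end, which produces a logarithmic divergence for $L^{n^*}$ and a genuine divergence for $p>n^*$.
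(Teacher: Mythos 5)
The theorem you are asked to prove is not proved in this paper; it is quoted from Hassell--Sikora \cite{HaSi}, and the machinery behind it is only summarized in Section~2. With that caveat, your proposal has the right conceptual skeleton (exploit the end structure, build a parametrix across $K$, derive cross-end kernel asymptotics, read off the critical range from an integrability count), but it routes differently from \cite{HaSi}: you use heat-kernel subordination $\Delta^{-1/2}=c\int_0^\infty e^{-t\Delta}t^{-1/2}\,dt$ and a partition-of-unity decomposition $\sum_{i,j}\chi_i R\chi_j$ followed by a Neumann-series bootstrap, whereas \cite{HaSi} use the resolvent representation $\nabla\Delta^{-1/2}=\tfrac{2}{\pi}\nabla\int_0^\infty(\Delta+k^2)^{-1}\,dk$, split into low and high energy, and build a low-energy parametrix $G(k)=G_1+\dots+G_4$ whose key ingredient is the solution $u_i$ of $(\Delta+k^2)u_i=v_i$ with sharp end-wise decay (Lemma~\ref{leKey}). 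That construction is arranged so that $(\Delta+k^2)G(k)=I$ \emph{exactly}, which sidesteps precisely the Neumann-series obstacle you flag as the main difficulty. Also note that Coulhon--Duong only yields $1<p\le 2$; boundedness of the near-$K$ piece in the range $2<p<n^*$ requires a different argument (in \cite{HaSi}, the pseudodifferential nature of $G_2(k)$ together with the separate high-energy analysis).

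There are two substantive gaps. First, your cross-end kernel estimate $|\nabla_x G(x,y)|\lesssim|x|^{1-n_j}|y|^{2-n_i}$ for $x\in E_j$, $y\in E_i$ is an overestimate: the correct behaviour after integrating the exponential factors $e^{-ck(|x|+|y|)}$ over the spectral parameter is
\[
\min\bigl(|x|^{-n_j}|y|^{2-n_i},\ |x|^{1-n_j}|y|^{1-n_i}\bigr)
\ \approx\ \frac{|x|^{1-n_j}|y|^{2-n_i}}{|x|+|y|},
\]
which is strictly smaller. With only your product bound, Schur/Young yields $L^p$-boundedness in the range $\tfrac{n_j}{n_j-1}<p<\tfrac{n_i}{2}$, not $1<p<n_i$, so the optimal range $1<p<n^*$ cannot be reached. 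The $\min$ structure is exactly what the Hardy--Hilbert-type analysis of Section~3.3 is built to exploit, and it cannot be discarded. (Your arithmetic ``$\int_{E_j}|x|^{(1-n_j)p}\,dx$ converges forces $p<n_j$'' is also off: that integral converges iff $p>n_j/(n_j-1)$; the condition $p<n_j$ comes from requiring the $|y|^{1-n_j}$-type factor in the \emph{source} variable to lie in $L^{p'}(E_j)$.) Second, your converse argument is stated backwards. A pointwise lower bound $|Rf(x)|\gtrsim|x|^{1-n^*}$ on the smallest end gives $\|Rf\|_{L^p}=\infty$ only for $p\le n^*/(n^*-1)$, which is in the range where boundedness already holds. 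The genuine obstruction at $p\ge n^*$ is in the source variable: the cross-end kernel contains a factor $\approx d(z_i^0,z')^{1-n_i}$ that is \emph{not} in $L^{p'}(E_i)$ once $p\ge n_i$, so one can choose $f\in L^p(E_{i^*})$ (with $n_{i^*}=n^*$) making $\int b(z')f(z')\,d\mu(z')=+\infty$, hence $Rf\equiv\infty$ on a set of positive measure (this is the mechanism behind \cite[Proposition~6.1]{HaSi} and Proposition~\ref{propUB} of this paper). You should reformulate the counterexample along those lines.
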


This result improves the work of Carron \cite{Ca} and gives an example of a setting in non-doubling manifold when $n_i$ are not all equal. Together with weak type (1,1) estimates, the $L^p$ boundedness of the Riesz transform has been described except the estimate for the end point $n^* = \min_k n_k$. 

To complete the picture, we wish to give a suitable endpoint result. In \cite[Section 5]{6}, Coulhon and Duong give a counterexample to show that $R$ is not bounded in $L^p$ for $p>n^*$ in the case where $n_i=n_j$. From \cite[Proposition 6.1]{HaSi}, we know that in a more general setting where $n_i$ does not need to equal to $n_j$, the strong type estimate cannot hold for $p\ge n^*$. In \cite[Section 3.7]{Li}, in a different setting, Li gives a counterexample to illustrate that the Riesz transform $R$ is not weak type $(p_0,p_0)$ for some critical value $p_0$ (In our case $p_0=n^*$). Therefore, it is natural for us to consider whether the restricted weak type estimates can be established.
 
\bigskip

To describe the endpoint extension of Theorem \ref{thmHA}, we have to recall notions in Lorentz space.
By $f^*$  we denote the decreasing rearrangement function of $f$, defined by the formula 
\begin{equation*}
    f^*(t) = \inf\{s>0; d_f(s)\le t\}
\end{equation*}
and $d_f$ is the usual distribution function of $f$ i.e.
\begin{equation*}
    d_f(s) = \mu(\{x\in X\colon |f(x)| >s  \}).
\end{equation*}

\begin{definition}
Let $f$ be a measurable function defined on a measure space $(X,\mu)$. For $0<p,q\le \infty$, define
\begin{equation*} 
     \|f\|_{(p,q)}=  
    \begin{cases}
    \left( \int_0^{\infty} \left(t^{1/p} f^*(t)\right)^q \frac{dt}{t}   \right)^{\frac{1}{q}}, & q<\infty\\
     \sup_{t>0} t^{1/p}f^*(t), & q=\infty
    \end{cases}
\end{equation*}
then we say $f$ is in the Lorentz space $L^{p,q}(X,\mu)$ if $\|f\|_{(p,q)}<\infty$.
\end{definition}

We mention that when $q=\infty$, the Lorentz space $L^{p,\infty}$ coincides with the weak $L^p$ space and
\begin{equation*}
    \sup_{t>0} t^{1/p}f^*(t) =  \sup_{\alpha>0} \alpha d_f(\alpha)^{1/p}
\end{equation*}
For a detailed discussion of the notion of Lorentz spaces, see \cite[section 1.4, Page 48]{8}.

Now we are in a position to state our main result, which is described in the following theorem. Our approach is based on the setting and techniques in \cite{9, GrSC, HaSi}.

\begin{theorem}\label{thmDYH}
Let $\mathcal{M} = (\mathbb{R}^{n_1} \times \mathcal{M}_1)\# \dots \#(\mathbb{R}^{n_l} \times \mathcal{M}_l)$ be a manifold with $l \ge  2$ ends, and $n_i\ge 3$ for each $i$. The Riesz transform, $R=\nabla \Delta^{-\frac{1}{2}}$, is bounded in $L^{n^* ,1}(\mathcal{M})$. That is, there exists some $C>0$ such that
\begin{equation}\label{1.2e}
    \|\nabla \Delta^{-\frac{1}{2}}(f)\|_{(n^*,1) }\le C\|f\|_{(n^* ,1)}
\end{equation}
where $n^*  = \min_{1\le k\le l}n_k$.

Moreover, $R$ is not bounded from $L^{n^*,p}\to L^{n^*,p}$ for any $p>1$.

\end{theorem}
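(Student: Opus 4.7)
The plan is to follow the parametrix construction in \cite{HaSi}. The Riesz transform is split as $R=R_{\mathrm{loc}}+R_{\infty}$, where $R_{\mathrm{loc}}$ is a pseudolocal piece treatable by standard singular integral theory on $\mathcal{M}$ (in particular it is bounded on $L^{p}(\mathcal{M})$ for every $1<p<\infty$), and $R_{\infty}$ is a long-range term that describes the inter-end behaviour of the kernel. Real interpolation between two $L^{p_{0}}, L^{p_{1}}$ bounds for $R_{\mathrm{loc}}$ with $1<p_{0}<n^{*}<p_{1}<\infty$ yields $R_{\mathrm{loc}}\colon L^{n^{*},1}(\mathcal{M})\to L^{n^{*},1}(\mathcal{M})$ at no extra cost, so the problem reduces to the same bound for $R_{\infty}$.

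For $f$ supported on an end $E_{i}=\mathbb{R}^{n_{i}}\times\mathcal{M}_{i}$, the parametrix of \cite{HaSi} gives an explicit representation of $R_{\infty}f$ on another end $E_{j}=\mathbb{R}^{n_{j}}\times\mathcal{M}_{j}$: modulo smooth factors coming from the compact cross-sections, its kernel is a positive homogeneous function of critical degree in the Euclidean radial variables. Integrating out the compact factors and passing to polar coordinates, the desired bound reduces to a one-dimensional Hardy--Hilbert type inequality
\begin{equation*}
    \left\| \int_{0}^{\infty} k(r,s)\, F(s)\, s^{n_{i}-1}\, ds \right\|_{L^{n^{*},1}(r^{n_{j}-1} dr)} \le C\, \|F\|_{L^{n^{*},1}(s^{n_{i}-1}ds)},
\end{equation*}
with a positive kernel $k$ of the critical homogeneity (morally $k(r,s)\approx(r+s)^{-\max(n_{i},n_{j})}$).

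The heart of the argument is proving this Lorentz-space Hardy--Hilbert inequality at the critical exponent, where the classical Hardy--Littlewood--P\'olya $L^{p}\to L^{p}$ estimate ceases to hold. My plan is to use the atomic characterisation
\begin{equation*}
    \|g\|_{(p,1)} \asymp \inf\Bigl\{ \sum_{k} |c_{k}|\, \mu(E_{k})^{1/p} : g = \sum_{k} c_{k} \chi_{E_{k}}\Bigr\},
\end{equation*}
reducing the bound to verifying $\|R_{\infty}\chi_{E}\|_{(n^{*},1)}\le C\,\mu(E)^{1/n^{*}}$ uniformly in $E$. The latter is proved by direct computation of the decreasing rearrangement of $R_{\infty}\chi_{E}$ on each end, using the explicit kernel decay and the Hardy inequality on the rearrangement. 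The Lorentz bookkeeping at this critical homogeneity, where the corresponding $L^{p}$ computation diverges but the $L^{p,1}$ integrals still converge by one order, is what I expect to be the main obstacle.

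For the non-boundedness statement, I would construct an explicit counterexample adapting the ideas of \cite[Prop.~6.1]{HaSi}. Fix $E_{1}$ with $n_{1}=n^{*}$ and, for $N\ge 1$, put
\begin{equation*}
    f_{N}=\sum_{k=1}^{N}2^{-k}\chi_{A_{k}},\qquad A_{k}=\{2^{k-1}\le|y|\le 2^{k}\}\subset E_{1}.
\end{equation*}
Since $A_{k}$ has measure $\asymp 2^{k n^{*}}$ and $f_{N}$ takes the value $2^{-k}$ there, the decreasing rearrangement satisfies $f_{N}^{*}(t)\asymp t^{-1/n^{*}}$ on $(c,C\,2^{N n^{*}})$, so $\|f_{N}\|_{(n^{*},p)}\asymp N^{1/p}$. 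The parametrix analysis above, applied to each $2^{-k}\chi_{A_{k}}$ and summed, gives $|Rf_{N}(x)|\gtrsim N\,|x|^{-n_{j}}$ on any other end $E_{j}$ for $|x|$ large, hence $\|Rf_{N}\|_{(n^{*},p)}\gtrsim N$. The ratio $N/N^{1/p}=N^{1/p'}$ is unbounded for every $p>1$, contradicting any hypothetical boundedness $R\colon L^{n^{*},p}\to L^{n^{*},p}$; at $p=1$ this ratio is constant, consistent with the first part of the theorem.
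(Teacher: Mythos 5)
Your outline follows the same overall architecture as the paper: split off a part of $R$ that is bounded on $L^p$ for all $1<p<\infty$ (and hence on $L^{n^*,1}$ by real interpolation, Theorem \ref{thmLSI} with $r=1$), reduce the remaining inter-end terms via the parametrix of \cite{HaSi} to operators with power-law kernels, and prove a Lorentz endpoint estimate for those by a Hardy--Hilbert argument. Where you diverge is in the Lorentz-space technique and in the counterexample. For the positive part, the paper does not use the atomic characterisation of $L^{p,1}$; instead it applies the Hardy--Littlewood rearrangement inequality (Lemma \ref{leHL}) pointwise in $z$ to the kernel, obtaining bounds of the form $|Tf(z)|\lesssim\|f\|_{(n^*,1)}\,g(z)$ with $g$ explicit and in $L^{n^*,1}$ (Propositions \ref{prop1}, \ref{prop2}); your atomic-decomposition route (test on $\chi_E$, then sum) is a valid alternative since $L^{n^*,1}$ is normable for $n^*>1$ and the relevant kernels are positive, but you should be aware you are replacing the paper's key lemma with a different one. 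For the negative part, the paper uses a single function $f(z')=d(z_i^0,z')^{-1}(1+|\log d(z_i^0,z')|)^{-\beta}$ with $1/p<\beta\le 1$, which makes $\int b\,f\,d\mu_i=\infty$ outright while $f\in L^{n_i,p}$, so there is no cancellation to worry about; your sequence $f_N$ achieves the same blow-up via a norm-ratio argument, which also works but is more delicate.

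Two points to be careful about. First, your statement that the reduced kernel is ``of critical degree, morally $(r+s)^{-\max(n_i,n_j)}$'' is not accurate for this model: the far-off-diagonal $E_i\times E_j$ kernels in the parametrix have homogeneity $-(n_i+n_j-2)$, which is strictly supercritical (they give a full region of $(p,q)$ boundedness, see Theorem \ref{thmT3PQ}); the obstruction at $p=n^*$ comes instead from the nearly rank-one pieces (the $\nabla\phi_i$ term in $T_1$ and the $K$-to-$E_i$ part of $T_3$), whose factor $b(z')\approx d(z_i^0,z')^{1-n_i}$ lies in $L^{(n^*)',\infty}(E_i)$ but not in $L^{(n^*)',p'}(E_i)$ for $p'<\infty$. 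Second, the pointwise lower bound $|Rf_N(x)|\gtrsim N\,|x|^{-n_j}$ on $E_j$ for $|x|$ large does not hold: the contribution of size $N$ from $\langle b,f_N\rangle$ is multiplied by a compactly supported function $a(z)$ (not by $|x|^{-n_j}$), and the true far-field contribution at $|x|=r>2^N$ is $\approx 2^N r^{-n_j}$, whose $L^{n^*,p}$ norm is $O(2^{-N})$. The conclusion $\|Rf_N\|_{(n^*,p)}\gtrsim N$ still holds, but it must be obtained by lower-bounding the rank-one piece in norm ($\approx N\|a\|_{(n^*,p)}$) and controlling the remaining bounded pieces by $CN^{1/p}$, then invoking the reverse triangle inequality; as stated, the pointwise bound is false.
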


We mention that this result is slightly stronger than the restricted weak type estimate i.e.
\begin{equation*}
    \|\nabla \Delta^{-\frac{1}{2}}(f)\|_{(n^*,\infty)}\le C\|f\|_{(n^* ,1)}
\end{equation*}
And the unboundedness in $L^{n^*,p}$ indicates it is not weak type $(n^*,n^*)$. Compare with \cite{Li}.

Some parts of the proof of boundedness of Riesz transform from \cite{HaSi} for $1<p<\infty$ can be partially simplified if one has only proved weak type $(1,1)$ estimate. Since we can use interpolation between Theorem \ref{thmDYH} and weak $(1,1)$ the get the boundedness for $p$ between. See Remark \ref{reinterpolation} below.

The subject of the Riesz transform is so broad that it is
impossible to provide a comprehensive bibliography of it here.
We refer readers to \cite{acdh, bak2, Ca, CaCo, 6, CJKS, G1, hl2, Si, shz05} and references within for the discussion of some other aspects of the Riesz transform theory.


\section{Manifolds with ends}
We start our discussion by introducing the notion of connected sum and manifolds with ends. We refer readers to \cite{GIS, 9, GrSC} for more detailed discussion of the notion of manifolds with ends.

\begin{definition}\label{2.1d}
Let $M_i$ where $i = 1,2,\dots,l$ be a family of complete connected non-compact Riemannian manifolds with the same dimension. Then we call the Riemannian manifold $\mathcal{M}$ is the connected sum of $M_1, . . . , M_l$ and write it as
\begin{equation*}
    \mathcal{M} = M_1\#  M_2\# \dots \# M_l
\end{equation*}
if for some compact subset $K \subset \mathcal{M}$, its compliment $\mathcal{M} \setminus K$ is a disjoint union of connected open sets $E_i$ where $i = 1,2,\dots,l$ such that each $E_i$ is isometric to $M_i \setminus K_i$ for some compact set $K_i \subset  M_i$. We call the subsets $E_i$ the $i$th end of $\mathcal{M}$. In other words, we cut holes, $K_i$, on each of the $M_i$ and then glue $M_i \setminus K_i$ together with the compact set $K$.
\end{definition}
In the following, we use $\mathcal{M}$ to denote manifold:
\begin{equation*}
    \mathcal{M} = (\mathbb{R}^{n_1} \times \mathcal{M}_1)\# \dots \#(\mathbb{R}^{n_l} \times \mathcal{M}_l)
\end{equation*}
which consists of the compact connection $K$ and $l$ ends $E_i=(\mathbb{R}^{n_i}\times \mathcal{M}_i) \setminus K_i$. For simplicity, we may assume that each $K_i$ is the product of $\mathcal{M}_i$ and a closed ball $\overline{B(x_i^0 , \epsilon_i)}\subset \mathbb{R}^{n_i}$ for some $x_i^0 \in \mathbb{R}^{n_i}$ and $\epsilon_i>0$. Moreover, we define the Riemannian metric on each end to be the product metric. That is $\forall z,z'\in (\mathbb{R}^{n_i}\times \mathcal{M}_i) \setminus K_i$, say $z = (x,y)$ and $z' = (x',y')$ where $x,x'\in \mathbb{R}^{n_i} \setminus \overline{B(x_i^0, \epsilon_i)}$ and $y,y'\in \mathcal{M}_i$ we have
\begin{equation*}
    d_{\mathcal{M}}(z,z')^2 = d_{\mathbb{R}^{n_i}}(x,x')^2 + d_{\mathcal{M}_i}(y,y')^2
\end{equation*}
where $d_{\mathcal{M}}, d_{\mathbb{R}^{n_i}}, d_{\mathcal{M}_i}$ are the corresponding Riemannian distance on $\mathcal{M}, \mathbb{R}^{n_i}, \mathcal{M}_i$. For simplicity, in what follow we will use $d(z,z')$ instead of $d_{\mathcal{M}}(z,z')$. We also introduce $\mu_i$, the induced Riemannian measure on $E_i$ and $\mu$, the measure on $\mathcal{M}$ with identity $\mu |_{E_i} = \mu_i$.

Observe that since $\mathcal{M}_i$ is compact, the Riemannian metric locally behaves like in $\mathbb{R}^{N}$ but behaves like $\mathbb{R}^{n_i}$ globally. That is for any geodesic ball $B(z,r)\subset E_i$ centered at $z$ with radius $r$, we have its volume estimate:
\begin{equation*}
    \mu_i(B(z,r)) \sim 
    \begin{cases}
        r^N, & r\le 1\\
        r^{n_i}, & r\ge 1
    \end{cases}
\end{equation*}
Note that the manifolds satisfies the doubling condition if and only if $n_i=n_j$ for any $1\le i,j\le l$.

\subsection{Resolvent of the Laplacian}

 We follow techniques described  in  \cite{HNS, HaSi, 14} and  briefly summarize the key idea of the approach and emphasize the crucial estimates and lemmas we use to prove Theorem \ref{thmDYH}. The starting point is primarily based on the study of the resolvent of the Laplacian. Since the Laplace operator $\Delta$ is positive and self-adjoint, the spectral theory guarantees that
\begin{equation} \label{2.1e}
    \nabla \Delta^{-\frac{1}{2}} = \frac{2}{\pi}\nabla \int_0^{\infty}(\Delta + k^2)^{-1}dk 
\end{equation}
Then the idea is to split it into two parts
\begin{equation} \label{2.2e}
    \nabla \Delta^{-\frac{1}{2}} = \frac{2}{\pi}\nabla \int_0^{1}(\Delta + k^2)^{-1}dk + \frac{2}{\pi}\nabla \int_{1}^{\infty}(\Delta + k^2)^{-1}dk 
\end{equation}
and we define
\begin{gather*}
    R_L = \frac{2}{\pi}\nabla \int_0^{1}(\Delta + k^2)^{-1}dk\\
    R_H = \frac{2}{\pi}\nabla \int_{1}^{\infty}(\Delta + k^2)^{-1}dk
\end{gather*}

We call $R_L$ and $R_H$ the low and high energy part of $R$. In the view of \cite[Proposition 6.1]{HaSi}, we know $R_H$ is bounded in $L^p$ for all $1<p<\infty$. Then by using Theorem \ref{thmLSI} below with $r=1$, we get $R_H$ is bounded in $L^{n^*,1}$. Therefore, instead of proving Theorem \ref{thmDYH}, it suffices to show the end point result for low energy part, $R_L$. The result of \cite[Proposition 6.1]{HaSi} can be stated in the following.
\begin{proposition}\label{propHA}
The Riesz transform localized to high energies, $R_H$, is bounded on $L^p({\mathcal{M}})$ for $1<p<\infty$. In addition, it is of weak type (1,1), that is, it is bounded from $L^1({\mathcal{M}})$ to $L^{1,\infty}({\mathcal{M}})$.
\end{proposition}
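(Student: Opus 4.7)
The natural starting point is to use the spectral theorem to rewrite $R_H$ in closed form. Evaluating the $k$-integral via functional calculus gives $R_H = \frac{2}{\pi}\nabla F(\Delta)$ where
\[
    F(\lambda) = \int_1^{\infty}\frac{dk}{\lambda+k^2} = \frac{1}{\sqrt{\lambda}}\Bigl(\frac{\pi}{2} - \arctan\frac{1}{\sqrt{\lambda}}\Bigr).
\]
A quick check shows $F$ is smooth and bounded on $(0,\infty)$ with $F(\lambda)\to 1$ as $\lambda\to 0^+$ and $F(\lambda)\sim \pi/(2\sqrt{\lambda})$ as $\lambda\to\infty$; in particular $\lambda F(\lambda)^2\le (\pi/2)^2$. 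This gives $L^2$ boundedness of $R_H$ for free:
\[
    \|R_H f\|_2^2 = \tfrac{4}{\pi^2}\bigl\langle \Delta F(\Delta)^2 f, f\bigr\rangle \le \|f\|_2^2.
\]

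The bulk of the work is to pass from $L^2$ to weak type $(1,1)$ via Calder\'on--Zygmund theory. I would estimate the Schwartz kernel of $R_H$ using resolvent estimates. Since $\mathcal M$ has bounded geometry and local dimension $N$, the kernel of $(\Delta+k^2)^{-1}$ admits a bound of the form $Cd(z,z')^{-(N-2)}e^{-ckd(z,z')}$ near the diagonal for $k\ge 1$, with an extra factor of $(d(z,z')^{-1}+k)$ after applying $\nabla$, together with analogous Lipschitz bounds in either variable. Integrating over $k\in[1,\infty)$, the exponential factor $e^{-kd(z,z')}$ concentrates the integral on scales $d(z,z')\lesssim 1$ where $\mathcal M$ looks Euclidean of dimension $N$, and yields
\[
    |K_{R_H}(z,z')|\le C\, d(z,z')^{-N} e^{-c\,d(z,z')},
\]
together with the matching gradient bound in each variable. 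Thus $R_H$ has a standard Calder\'on--Zygmund kernel, with the crucial bonus of exponential decay past unit scale.

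With these ingredients I would invoke Calder\'on--Zygmund theory on the locally doubling space $\mathcal M$: the exponential decay at scales $\gtrsim 1$ means the non-doubling behavior of $\mathcal M$ at infinity is invisible to $R_H$, so one can cut and paste the classical proof (cube decomposition replaced by the Christ-style decomposition into admissible sets at unit scale). This gives weak type $(1,1)$, and Marcinkiewicz interpolation with the $L^2$ bound yields $L^p$ for $1<p\le 2$. For $p>2$, I would run the same kernel analysis on the adjoint $R_H^* = \frac{2}{\pi}F(\Delta)\,\mathrm{div}$, which inherits the same structural bounds, and conclude by duality.

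The main obstacle is the uniform control of the resolvent kernel and its gradient across the gluing region where the ends meet the compact part $K$: away from $K$, each end is a product $\mathbb R^{n_i}\times \mathcal M_i$ and the resolvent is explicit, but near $K$ one must construct a parametrix and track the derivative estimate through the gluing. Since we restrict to $k\ge 1$ the exponential factor kills any long-range interactions between ends, so the analysis is essentially local and reduces to the parametrix estimates already developed by Hassell and Sikora~\cite{HaSi}. Once those estimates are in hand, the rest of the argument is classical.
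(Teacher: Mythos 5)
The paper does not prove this proposition itself; it simply cites \cite[Proposition 6.1]{HaSi} and records the statement. Your sketch is the argument one would expect (and essentially the one Hassell--Sikora carry out): $L^2$ boundedness follows from spectral theory since $\lambda F(\lambda)^2$ is bounded, the kernel of $R_H$ inherits the bound $d(z,z')^{-N}e^{-cd(z,z')}$ together with matching regularity from the resolvent estimates \eqref{2.5e} applied for $k\ge 1$, and the exponential decay past unit scale reduces everything to the locally doubling, bounded-geometry regime where a unit-scale Calder\'on--Zygmund decomposition yields weak $(1,1)$; $1<p\le 2$ then follows by interpolation and $p>2$ by duality using the kernel regularity in the first variable. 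Your writeup correctly identifies the one genuinely non-trivial input --- uniform control of the resolvent kernel and its gradients (in both variables, to get the H\"ormander condition for $R_H$ and its adjoint) across the gluing region --- and appropriately defers to the parametrix analysis in \cite{HaSi}, which is exactly what this paper does as well.
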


\subsection{The estimates of resolvent on the product space $\mathbb{R}^{n_i}\times \mathcal{M}_i$}
According to \eqref{2.1e} and \eqref{2.2e}, we can see that the resolvent plays an important role in constructing our Riesz transform operator. Therefore, in order to give an appropriate estimate for the kernel of $R_L$, we first give some straight estimates for the kernel of resolvent, also see \cite{HNS, HaSi, 14}. Denote by $\Delta_{\mathbb{R}^{n_i}\times \mathcal{M}_i}$ and $e^{-t \Delta_{\mathbb{R}^{n_i}\times \mathcal{M}_i}}(z,z')$ the Laplacian and heat kernel on $\mathbb{R}^{n_i}\times \mathcal{M}_i$ where we use the coordinates $z = (x,y)$, $x\in \mathbb{R}^{n_i}$ and $y\in \mathcal{M}_i$. 

By using the identity in heat semi-group 
\begin{equation} \label{2.3e}
    (\Delta_{\mathbb{R}^{n_i}\times \mathcal{M}_i}+k^2)^{-1} = \int_0^{\infty}e^{-t\Delta_{\mathbb{R}^{n_i}\times \mathcal{M}_i}}e^{-k^2t}dt
\end{equation}

We divert our focus on the estimates for the heat kernel. It is well-known that the heat kernel on the normal Euclidean space has expression
\begin{equation*}
    e^{-t \Delta_{\mathbb{R}^{n_i}}}(x,x') = \frac{1}{(4\pi t)^{n_i/2}} \textrm{exp}\left(-\frac{|x-x'|^2}{4t}\right)
\end{equation*}
where $x,x'\in \mathbb{R}^{n_i}$ and $t>0$.

While for the heat kernel on the compact manifold, $\mathcal{M}_i$, we have Gaussian estimates for $0<t\le 1$
\begin{equation*}
    C t^{-\frac{N-n_i}{2}} \textrm{exp}\left(\frac{d(y,y')^2}{C t}\right)\le e^{-t \Delta_{\mathcal{M}_i}}(y,y')\le C' t^{-\frac{N-n_i}{2}} \textrm{exp}\left(\frac{d(y,y')^2}{C' t}\right)
\end{equation*}
and for $t\ge 1$
\begin{equation*}
    C \le e^{-t \Delta_{\mathcal{M}_i}}(y,y')\le C'
\end{equation*}
where $y,y'\in \mathcal{M}_i$ and $0<C<C'<\infty$. See \cite{ChengLi, ChYa}. 

Since the heat kernel on the product space is just the product of heat kernel on each of the factors. Hence, we have
\begin{equation*}
    e^{-t \Delta_{\mathbb{R}^{n_i}\times \mathcal{M}_i}}(z,z') = e^{-t \Delta_{\mathbb{R}^{n_i}}}(x,x')e^{-t \Delta_{\mathcal{M}_i}}(y,y')
\end{equation*}
thus,
\begin{equation}\label{2.4e}
    C (t^{-\frac{n_i}{2}} + t^{-\frac{N}{2}})\textrm{exp}\left(-\frac{d(z,z')^2}{ct}\right)\le e^{-t \Delta_{\mathbb{R}^{n_i}\times \mathcal{M}_i}}(z,z')\le  C' (t^{-\frac{n_i}{2}} + t^{-\frac{N}{2}})\textrm{exp}\left(-\frac{d(z,z')^2}{c't}\right)
\end{equation}

Now we consider ordinary differential equation
\begin{equation*}
    f''(r) + \frac{a -1}{r}f'(r) =f(r)
\end{equation*}
there exists a positive solution
\begin{equation*}
    L_{a}(r) = r^{1-a/2}K_{|a/2-1|}(r)
\end{equation*}
where $K$ is the modified Bessel function \cite[Page 374]{AbMi}. Moreover, the asymptotics for $L_a$ when $a \ge 3$ are given by
\begin{equation*}
    C r^{2-a} e^{-r} \le L_a(r) \le C'r^{2-a}e^{-cr}, \quad 0<c<1
\end{equation*}
notice that for all $a \ge 1$
\begin{equation*}
    \int_0^{\infty}e^{-tk^2}t^{-\frac{a}{2}}e^{-r^2/(4t)}dt = C_{a}k^{a -2}L_a(kr)
\end{equation*}

By using the asymptotics above, the estimates for the kernel of the resolvent are given by
\begin{gather}\nonumber
    (\Delta_{\mathbb{R}^{n_i}\times \mathcal{M}_i} + k^2)^{-1}(z,z') = \int_0^{\infty}e^{-k^2t}e^{-t\Delta_{\mathbb{R}^{n_i}\times \mathcal{M}_i}}(z,z')dt \\ \nonumber
    \le C \int_0^{\infty}e^{-tk^2}(t^{-\frac{n_i}{2}} + t^{-\frac{N}{2}})\textrm{exp}\left(-\frac{d(z,z')^2}{ct}\right)dt \\= \nonumber
    C (k^{n_i-2}L_{n_i}(ckd(z,z'))+k^{N-2}L_{N}(ckd(z,z'))) \\
    \le C(d(z,z')^{2-N} + d(z,z')^{2-n_i})e^{-ckd(z,z')}
    \label{2.5e}
\end{gather}
and similarly
\begin{equation}\label{2.6e}
    (\Delta_{\mathbb{R}^{n_i}\times \mathcal{M}_i} + k^2)^{-1}(z,z')\ge C'(d(z,z')^{2-N} + d(z,z')^{2-n_i})e^{-ckd(z,z')}
\end{equation}

\subsection{Low energy parametrix}
In this section, we briefly introduce the methods used in \cite{11, HaSi} to construct the parametrix of the kernel of the resolvent $(\Delta +k^2)^{-1}$ on $\mathcal{M}$ in low energy.

At first for each $\mathbb{R}^{n_i} \times \mathcal{M}_i$ $(1\le i\le l)$, we fix a point $z_i^0\in K_i$. Referring to \cite[Lemma 2.7]{HaSi} and \cite[Lemma 3.19]{14}, we mention the following crucial lemma.

\begin{lemma}\label{leKey}
Assume that each $n_i \ge 3$. Let $v \in C_c^{\infty}(\mathcal{M};\mathbb{R})$. Then there is a function $u: \mathcal{M} \times \mathbb{R}^+ \to  \mathbb{R}$ such that $(\Delta + k^2)u = v$ and on the $i$th end we have:
\begin{equation}\label{2.7e}
    |u(z, k)| \le  C d(z_i^0 , z)^{2-n_i} e^{- ck d(z_i^0, z)}\quad  \forall z \in  \mathbb{R}^{n_i}\times \mathcal{M}_i
\end{equation}
\begin{equation}\label{2.8e}
    |\nabla u(z, k)| \le  C d(z_i^0 , z)^{1-n_i} e^{- ck d(z_i^0, z)}\quad  \forall z \in  \mathbb{R}^{n_i}\times \mathcal{M}_i
\end{equation}
\begin{gather*}
    |u(z, k)| \le  C \quad  \forall z \in K\\
    |\nabla u(z, k)| \le  C \quad  \forall z \in K
\end{gather*}
for some $c,C > 0$.
\end{lemma}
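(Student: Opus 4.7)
The plan is to construct a parametrix for $\Delta+k^2$ on $\mathcal{M}$ by gluing the model-space resolvents $G_i(k)=(\Delta_{\mathbb{R}^{n_i}\times\mathcal{M}_i}+k^2)^{-1}$ via a partition of unity adapted to the decomposition $\mathcal{M}=K\cup\bigcup_i E_i$, invert the resulting error by a Neumann series, and then read off the pointwise estimates directly from \eqref{2.5e}, exploiting that $v$ has compact support so that $d(z,z')\approx d(z_i^0,z)$ once $z$ lies deep in the $i$-th end.

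For the setup, I would fix smooth cutoffs $\phi_0,\phi_1,\dots,\phi_l$ forming a partition of unity on $\mathcal{M}$ with $\phi_0$ supported in a compact neighborhood of $K$ and each $\phi_i$ ($i\ge 1$) supported in $E_i$ and equal to $1$ outside a slight enlargement of $K_i$; pair these with companion cutoffs $\psi_i$ that equal $1$ on $\operatorname{supp}\phi_i$ and have the same support properties. Define the parametrix
\[
   G_0(k)\,v \;=\; \psi_0\,G_K\,v \;+\; \sum_{i=1}^{l}\psi_i\,G_i(k)\bigl(\phi_i v\bigr),
\]
where $G_K$ is any local right inverse of $\Delta+k^2$ on a fixed neighborhood of $K$ obtained from the standard Euclidean parametrix in local coordinates (the $k$-dependence on this fixed compact set is harmless). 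Each summand is defined by extending $\phi_i v$ by zero to $\mathbb{R}^{n_i}\times\mathcal{M}_i$ and applying the model resolvent.

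A direct computation gives $(\Delta+k^2)G_0(k)v = v + E(k)v$, where $E(k)$ collects the commutator kernels $[\Delta,\psi_i]G_i(k)(\phi_i v)$ together with the mismatch coming from $\psi_0 G_K v$. The crucial feature is that each commutator kernel is supported where $\nabla\psi_i\neq 0$, a fixed compact strip strictly separated from $\operatorname{supp}\phi_i$, so \eqref{2.5e} equips $E(k)$ with a uniform exponential factor $e^{-ck}$. For $k$ large this lets me invert $I+E(k)$ by a convergent Neumann series on, say, $C^0$ of a fixed compact set containing $\operatorname{supp}v$; for $k$ in a bounded range I instead use that $\Delta+k^2$ is globally invertible on $\mathcal{M}$ and perturb that known inverse against a finite-dimensional defect by a standard Fredholm argument. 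Setting $u=G_0(k)(I+E(k))^{-1}v$ then yields a genuine solution of $(\Delta+k^2)u=v$.

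To obtain \eqref{2.7e}--\eqref{2.8e}, take $z$ in the $i$-th end with $d(z_i^0,z)$ large, so only the summand $\psi_i G_i(k)(\phi_i v)$ contributes to leading order; combining \eqref{2.5e} with the compact support of $v$ gives
\[
   |u(z,k)| \;\le\; C\int_{\operatorname{supp}v}\bigl(d(z,z')^{2-N}+d(z,z')^{2-n_i}\bigr)e^{-ckd(z,z')}\,d\mu(z'),
\]
which collapses to $Cd(z_i^0,z)^{2-n_i}e^{-ckd(z_i^0,z)}$ because $N\ge n_i$ and $d(z,z')\approx d(z_i^0,z)$ uniformly for $z'\in\operatorname{supp}v$. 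The gradient bound follows by differentiating the Bessel asymptotics of $L_{n_i}$, which costs one extra power of $d^{-1}$; on $K$ local elliptic regularity for $\Delta+k^2$ applied to smooth compactly supported data yields the trivial $O(1)$ bounds. The main obstacle I anticipate is ensuring that the Neumann correction does not spoil the exponential factor $e^{-ckd(z_i^0,z)}$: since each application of $E(k)$ only shifts the effective source by a bounded distance (the width of $\operatorname{supp}\nabla\psi_i$), the exponential persists up to a reduction of the constant $c$, but this propagation has to be tracked carefully, exactly as in \cite[Lemma 2.7]{HaSi}.
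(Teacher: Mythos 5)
The paper itself does not prove this lemma; it cites it as \cite[Lemma 2.7]{HaSi} and \cite[Lemma 3.19]{14}. So I cannot compare your proposal against a proof in the body of the paper, but I can assess its correctness, and it has a genuine gap in the small-$k$ regime.

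Your inversion of $I+E(k)$ by Neumann series hinges on $E(k)$ being a contraction, and the smallness you invoke comes from the exponential factor $e^{-ck}$ that you extract from \eqref{2.5e} by separating $\operatorname{supp}\nabla\psi_i$ from the source. That factor degenerates to $1$ as $k\to 0$, which is exactly the regime where the uniform estimates \eqref{2.7e}--\eqref{2.8e} are hard; for bounded $k$ nothing forces $\|E(k)\|<1$. (There is also a geometric slip: since $\phi_i\equiv 1$ outside a compact set, $\operatorname{supp}\nabla\psi_i$ cannot be disjoint from $\operatorname{supp}\phi_i$; you presumably meant the support of $\phi_i v$, but $v$ is an arbitrary $C_c^\infty$ function and can straddle the collar, so even that separation is not free.) Your fallback for bounded $k$ --- ``$\Delta+k^2$ is globally invertible, perturb by a Fredholm argument'' --- does not rescue uniformity either: the $L^2$ operator norm of $(\Delta+k^2)^{-1}$ is $k^{-2}$ and blows up as $k\to 0$, so invertibility alone gives $k$-dependent constants, whereas the lemma requires a single $C$ valid down to $k=0$. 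A related difficulty you flag yourself --- that each application of $E(k)$ shifts the source and degrades the constant $c$ --- would compound across the infinitely many Neumann terms and is not resolved by the remark that the shift per step is bounded.

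The standard route here avoids the Neumann series entirely. For $k>0$ existence is free from the $L^2$ resolvent, $u=(\Delta+k^2)^{-1}v$. For the decay, one observes that away from $\operatorname{supp}v$ the function $u$ solves the homogeneous equation $(\Delta+k^2)u=0$ on $E_i$, and one compares it against the barrier $\tilde u(z)=C\,d(z_i^0,z)^{2-n_i}e^{-ck\,d(z_i^0,z)}$, which by the Bessel asymptotics (the function $L_{n_i}$) is a supersolution of $\Delta+k^2$ for suitable $c,C>0$ uniformly in $k\ge 0$. The maximum principle then gives \eqref{2.7e}, and \eqref{2.8e} follows from interior gradient estimates for the elliptic equation $(\Delta+k^2)u=0$ on unit balls in $E_i$. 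This bypasses any smallness requirement and handles $k\to 0$ automatically because the barrier is defined and effective at $k=0$. Your Bessel-asymptotics computation for the kernel bound in the final paragraph is the right estimate --- it is essentially the supersolution property --- but it needs to enter as a comparison argument, not as a consequence of an inversion whose convergence you have not secured.
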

Following \cite{HaSi}, we construct $G(k)$, the parametrix of $(\Delta +k^2)^{-1}$ on $\mathcal{M}$ in low energy, as follows. Pick $\phi_i \in C^{\infty}(\mathcal{M})$ such that the support of $\phi_i$ is contained in $E_i$ and it equals 1 outside some compact subset of $\mathbb{R}^{n_i}\times \mathcal{M}_i$. Then we set $v_i = -\Delta\phi_i$ which is clearly supportted on some annuli of $\mathbb{R}^{n_i}\times \mathcal{M}_i$. Moreover we let $G_{int}(k)$ be an interior parametrix of resolvent which is supported in a set close to
 $K\times K$, say $O$. And $G_{int}(k)=(\Delta_{\mathbb{R}^{n_i}\times \mathcal{M}_i}+k^2)^{-1}(z,z')$ on $O\cap \textrm{supp}(\nabla \phi_i(z)\phi_i(z'))$. We mention here $\mathcal{M}^2$ is consisting of pieces: $E_i\times E_j$, $E_i\times K$, $K\times E_j$ and $K\times K$ for $1\le i,j\le l$. Next, we let $u_i(z,k)$ be a function defined on $\mathcal{M}\times \mathbb{R}^+$ such that $(\Delta +k^2)u_i = v_i$. Let $z_i^0$ be some fixed point in $K_i$. Then the parametrix $G(k)$ is defined as 
\begin{equation*}
    G(k) = G_1(k) + G_2(k) + G_3(k) +G_4(k)
\end{equation*}
where
\begin{equation}\label{2.9e}
    G_1(k) = \sum_1^l (\Delta_{\mathbb{R}^{n_i}\times \mathcal{M}_i} +k^2)^{-1}(z,z') \phi_i(z) \phi_i(z')
\end{equation}
\begin{equation}\label{2.10e}
    G_2(k) = G_{int}(k)  \left(1-\sum_1^l \phi_i(z)\phi_i(z')\right)
\end{equation}
\begin{equation}\label{2.11e}
    G_3(k) = \sum_1^l (\Delta_{\mathbb{R}^{n_i}\times \mathcal{M}_i} +k^2)^{-1}(z_i^0,z') u_i(z,k)\phi_i(z')
\end{equation}
Observe that $G_1$ is supported in the "diagonal", $\bigcup_{1\le i\le l} E_i\times E_i$. $G_2$ is compactly supported in $\mathcal{M}^2$ and $G_3$ is defined on whole $\mathcal{M}^2$. Finally, $G_4$ is the error term which will be defined as follows.

First, we define error term $E(k)$ by
\begin{equation*}
    (\Delta + k^2)(G_1(k)+G_2(k)+G_3(k)) = I + E(k)
\end{equation*}
which can be calculated explicitly as
\begin{gather*}
    E(k) = \sum_{i = 1}^l \Big( -2 \nabla \phi_i(z) \phi_i(z') \big(\nabla_z (\Delta_{\mathbb{R}^{n_i} \times \mathcal{M}_i} + k^2)^{-1}(z,z') - \nabla_z G_{int}(z,z')\big)\\
    +\phi_i(z') v_i(z) \big( -(\Delta_{\mathbb{R}^{n_i} \times \mathcal{M}_i} + k^2)^{-1}(z,z') + G_{int}(z,z') + (\Delta_{\mathbb{R}^{n_i} \times \mathcal{M}_i} + k^2)^{-1}(z_i^0,z')\big) \Big)\\
    + \big( (\Delta + k^2)G_{int}(z,z') - \delta_{z'}(z)      \big) \left(1-\sum_1^l \phi_i(z)\phi_i(z')\right)
\end{gather*}
Note that $E(k)(z,z')$ is smooth and compactly supported in the first variable where precisely 
\begin{equation*}
    \textrm{supp}(E(z,z')) \subset  \left(\bigcup_{i=1}^l \textrm{supp}(\nabla \phi_i) \right) \times \mathcal{M} \subset \mathcal{M}^2
\end{equation*}
Moreover if we let $\eta(z)$ be the characteristic function of the set $\bigcup_{i=1}^l \rm{supp}(\nabla \phi_i)$. By \eqref{2.5e}, we have
\begin{equation}\label{2.12e} 
    |E(k)(z,z')| \lesssim  
    \begin{cases}
    \eta(z), & z'\in K\\
     \eta(z) d(z_j^0,z')^{1-n_j}  \textrm{exp} \left(-ck \cdot d(z_j^0,z')\right), & z' \in E_j 
    \end{cases}
\end{equation}
Now for each $z' \in \mathcal{M}$ we define function $v_{z'}(z) = E(k)(z,z')$ which is in $C_c^{\infty}(\mathcal{M})$. By Lemma \ref{leKey}, we can set
\begin{equation*}
    G_4(k)(z,z') = -(\Delta +k^2)^{-1} v_{z'}(z)
\end{equation*}
and thus
\begin{equation*}
    (\Delta+k^2)G(k) = I
\end{equation*}
which implies that the parametrix $G(k)$ is actually the exact resolvent of $\Delta$ in low energy. 

Moreover, Lemma \ref{leKey} also tells us $\forall z\in \mathbb{R}^{n_i}\times \mathcal{M}_i$,
\begin{gather}\label{2.13e}
    |\nabla G_4(k)(z,z')| \lesssim \\
    \begin{cases}
    1, & z\in K, z'\in K\\ \nonumber
    d(z_j^0,z')^{1-n_j} \textrm{exp}(-ck\cdot d(z_j^0,z')), & z\in K, z'\in E_j\\
    d(z_i^0,z)^{1-n_i}\textrm{exp}(-ck\cdot d(z_i^0,z)), & z\in E_i, z'\in K\\
    d(z_j^0,z')^{1-n_j} d(z_i^0,z)^{1-n_i} \textrm{exp}(-ck\cdot d(z_j^0,z')) \textrm{exp}(-ck\cdot d(z_i^0,z)), & z\in E_i, z'\in E_j
    \end{cases}
\end{gather}
where $1\le i,j \le l$.

\section{Hardy-Hilbert type inequalities and Lorentz spaces}
In this section, we prove Hardy-Hilbert type inequalities by using the tools in Lorentz spaces. In section 3.2 and 3.3, we generalize Hardy-Littlewood-P\'olya inequality with settings of homogeneous space and inhomogeneous space separately when operator acting between spaces with different "dimensions". Finally, we use these results to give a strong type $(p,q)$ picture for part of $R_L$ as an application. 

In the case of $\mathbb{R}$, Riesz transform is equivalent with Hilbert transform. In \cite{15}, Riesz proved that Riesz transform on one-dimension is bounded on $L^p$ for all $1<p<\infty$ which motivated some further studies like the boundedness of the Hilbert transform on half-line:
\begin{equation*}
f \mapsto \int_0^\infty \frac{f(y)}{x+y}dy    
\end{equation*}
and the famous Hardy's inequality:
\begin{equation*}
    \|\frac{1}{x} \int_0^x |f(t)|dt\|_{L^p(\mathbb{R}^+,dx/x)} \le \frac{p}{p-1} \|f\|_{L^p(\mathbb{R}^+,dx/x)} \quad 1<p<\infty
\end{equation*}

In \cite[Theorem 319]{HLP}, authors generalize these results to operators with homogeneous kernel which we call "\textit{Hardy-Hilbert type}". 

Inequalities originated from \cite[Theorem 319]{HLP} with new settings have their own interests. The results listed in this section which are related to the proof of Theorem \ref{thmDYH} are Section 3.1 and Theorem \ref{thmWE}.

\subsection{Hardy-Littlewood inequality}
In what follows we will need the following lemma.
\begin{lemma}[Hardy-Littlewood]\label{leHL}
For all $\mu$ measurable function $f,g$, we have
\begin{equation*}
    \left|\int_{X}fgd\mu \right| \le \int_0^{\infty}f^*(t)g^*(t)dt
\end{equation*}
where $f^*$ and $g^*$ represent the decreasing rearrangement functions. 
\end{lemma}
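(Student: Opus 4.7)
The plan is to reduce the inequality to a pointwise comparison of distribution functions via the layer cake (Cavalieri) representation. Without loss of generality I may replace $f$ and $g$ by $|f|$ and $|g|$, since $|\int fg\, d\mu| \le \int |f||g|\, d\mu$ and the right-hand side of the claim is unchanged. I may also assume both $\int f^* g^*\, dt$ is finite, otherwise the inequality is trivial.

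The first step is to express both sides as double integrals using the identity $h(x) = \int_0^\infty \mathbf{1}_{\{h>s\}}(x)\, ds$ valid for any nonnegative measurable $h$. Applying this to both $|f|$ and $|g|$ and invoking Fubini's theorem yields
\begin{equation*}
\int_X |f||g|\, d\mu = \int_0^\infty \int_0^\infty \mu\bigl(\{|f|>s\} \cap \{|g|>r\}\bigr)\, ds\, dr.
\end{equation*}
Performing the same manipulation on the Lebesgue integral on the real line, using that $\{t>0 : f^*(t) > s\}$ is an interval of length $d_f(s)$ (this is the defining property of the decreasing rearrangement), I obtain
\begin{equation*}
\int_0^\infty f^*(t) g^*(t)\, dt = \int_0^\infty \int_0^\infty \min\bigl(d_f(s), d_g(r)\bigr)\, ds\, dr.
\end{equation*}

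The second step is the pointwise bound. For any two measurable sets $A, B \subset X$ one clearly has $\mu(A \cap B) \le \min(\mu(A), \mu(B))$. Applied with $A = \{|f|>s\}$ and $B = \{|g|>r\}$, this gives
\begin{equation*}
\mu\bigl(\{|f|>s\} \cap \{|g|>r\}\bigr) \le \min\bigl(d_f(s), d_g(r)\bigr)
\end{equation*}
for every $(s,r) \in (0,\infty)^2$. Integrating over $(s,r)$ and combining with the two layer-cake identities above completes the proof.

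There is no real obstacle here beyond bookkeeping; the only minor technical point is justifying Fubini, which is immediate because the integrands are nonnegative and measurable. The proof is essentially the classical argument of Hardy--Littlewood--P\'olya and does not require any structure on $(X,\mu)$ beyond $\sigma$-finiteness of the sublevel sets, which is automatic once $\int f^* g^*\, dt$ is assumed finite (otherwise the inequality is vacuous).
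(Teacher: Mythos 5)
The paper does not prove this lemma itself; it cites \cite[Theorem 2.4]{Maly} for the argument, so there is no in-paper proof to compare against. Your proof via the layer-cake (Cavalieri) representation is correct and complete: the double-integral identity $\int_X |f||g|\,d\mu = \int_0^\infty\int_0^\infty \mu(\{|f|>s\}\cap\{|g|>r\})\,ds\,dr$ follows from Tonelli, the corresponding identity for $\int_0^\infty f^*g^*\,dt$ follows because $\{f^*>s\}$ is an interval $[0,d_f(s))$ (by equimeasurability of $f$ and $f^*$ together with monotonicity of $f^*$), and the reduction to the trivial set-theoretic bound $\mu(A\cap B)\le\min(\mu(A),\mu(B))$ closes the argument. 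This is the standard proof of the Hardy--Littlewood rearrangement inequality, and your remarks on the technical points (nonnegativity for Tonelli, triviality when the right side is infinite) are the right ones to make.
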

The proof can be found in \cite[Theorem 2.4]{Maly}. Note that Hardy-Littlewood inequality connects the integral-type operator with the Lorentz quasi-norm since the right-hand side can be estimated as
\begin{equation*}
    \int_0^{\infty}f^*(t)g^*(t)dt = \int_0^{\infty} \left(t^{1/p}f^*(t)\right) \left(t^{1/p'}g^*(t)\right)\frac{dt}{t} \le \|f\|_{(p,q)} \|g\|_{(p',q')}
\end{equation*}
which plays a crucial role in the proof of our main result. We also mention the \textit{nested} property of Lorentz spaces. That is $\forall 0<p\le \infty$ and $0<q\le r\le \infty$ we have
\begin{equation*}
    \|f\|_{(p,r)} \lesssim \|f\|_{(p,q)}
\end{equation*}

Before stating the next theorem, we first recall that a linear operator on the measure space $(X,\mu)$ is of \textit{restricted weak type} $(p,q)$ if for all measurable subset $A$ of $X$ with finite measure we have
\begin{equation*}
    \|T(\chi_A)\|_{(q,\infty)} \lesssim \mu(A)^{1/p}
\end{equation*}
We will use the following interpolation theorem \cite[Theorem 1.4.19]{8}.

\begin{theorem}\label{thmLSI}
Let $0<r< \infty$, $0<p_0\ne p_1< \infty$, and $0<q_0\ne q_1\le \infty$ and let $(X,\mu)$, $(Y,\nu)$ be $\sigma \text{-}$ finite measure spaces. Let $T$ be a linear operator defined on the space of simple functions on $X$ and taking values in the set of measurable functions on $Y$. Assume that $T$ is of \textit{restricted weak type} $(p_0,q_0)$ and $(p_1,q_1)$. Then for $0<\theta<1$ and
\begin{equation*}
    \frac{1}{p} = \frac{1-\theta}{p_0} + \frac{\theta}{p_1}\quad \textrm{and} \quad \frac{1}{q} = \frac{1-\theta}{q_0} + \frac{\theta}{q_1}
\end{equation*}
we have
\begin{equation*}
    \|T(f)\|_{(q,r)} \lesssim \|f\|_{(p,r)}\quad \forall f\in L^{p,r}
\end{equation*}

\end{theorem}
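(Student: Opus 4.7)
The plan is to prove Theorem \ref{thmLSI} via a Marcinkiewicz-style interpolation built on a dyadic layer-cake decomposition of $f$, which converts the restricted weak type hypotheses (available only for indicator functions) into an estimate on a general simple function $f$. By density of simple functions in $L^{p,r}$ and by decomposing into real/imaginary and positive/negative parts with loss of only an absolute constant, it suffices to establish the estimate for nonnegative simple functions of finite-measure support.

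First, I would write $f = \sum_{j \in \mathbb{Z}} f_j$ with $f_j = f \chi_{F_j}$ and $F_j = \{2^j < f \le 2^{j+1}\}$, so that $2^j \chi_{F_j} \le f_j \le 2^{j+1}\chi_{F_j}$ and the $F_j$ are pairwise disjoint. The restricted weak type hypothesis applied to each $\chi_{F_j}$ yields
\begin{equation*}
    (T\chi_{F_j})^*(s) \le C_i\, \mu(F_j)^{1/p_i}\, s^{-1/q_i}, \qquad i = 0, 1.
\end{equation*}
Combining these via the subadditivity $(g+h)^*(s_1+s_2) \le g^*(s_1) + h^*(s_2)$ of the decreasing rearrangement, and then, for each fixed $t$, splitting the resulting sum over $j$ at the threshold $j^*(t)$ where the $(p_0, q_0)$ and $(p_1, q_1)$ bounds cross, produces a pointwise estimate on $(Tf)^*(t)$ of the form $t^{-1/q_0}\sum_{j \le j^*(t)} 2^j \mu(F_j)^{1/p_0}$ plus an analogous $(p_1, q_1)$ tail over $j > j^*(t)$. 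The interpolation identities $1/p = (1-\theta)/p_0 + \theta/p_1$ and $1/q = (1-\theta)/q_0 + \theta/q_1$ are designed precisely so that, after integrating against $t^{r/q}\,dt/t$ and interchanging summation and integration, the $t$-integrals telescope and what remains is comparable to $\sum_j (2^j)^r \mu(F_j)^{r/p}$, which in turn is comparable to $\|f\|_{(p,r)}^r$ by the definition of the Lorentz norm.

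The main obstacle is the last step: moving the Lorentz quasi-norm past the infinite sum in $j$. Since $\|\cdot\|_{(q,r)}$ is only subadditive up to a constant (and not even that when $r < 1$), a direct triangle inequality fails, and one must invoke a weighted Hardy-type inequality on $\mathbb{Z}$ which exploits the fact that $1/p_0$ and $1/p_1$ sit strictly on opposite sides of $1/p$, so the two geometric tails produced by the threshold splitting are summable. This is exactly the Hardy--Littlewood--P\'olya circle of ideas developed in Section 3.1 and Theorem \ref{thmWE} below, which is why the paper takes the time to set them up. At the endpoint $r = \infty$ the argument collapses to a single threshold estimate with no summation at all, recovering the classical Stein--Weiss form of the Marcinkiewicz theorem.
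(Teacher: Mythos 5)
The paper does not actually prove Theorem \ref{thmLSI}; it defers the proof to Grafakos \cite[Theorem~1.4.19]{8}, and the Hardy--Littlewood--P\'olya machinery of Section 3 is developed to prove the main Theorem \ref{thmDYH}, not this interpolation lemma. Your closing remark, that Section 3.1 and Theorem \ref{thmWE} are ``exactly'' what is needed here, therefore misreads the paper's logical structure.

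On the argument itself: the overall strategy --- decompose into indicator pieces, apply the restricted weak type bounds, split the sum at a $t$-dependent threshold, and then sum the geometric tails via a discrete weighted Hardy inequality --- is indeed the standard Marcinkiewicz-style route, and it is essentially what Grafakos does. But there is a concrete gap at the point where the restricted weak type hypothesis enters. You set $f_j = f\chi_{F_j}$ with $F_j = \{2^j < f \le 2^{j+1}\}$, note $2^j\chi_{F_j} \le f_j \le 2^{j+1}\chi_{F_j}$, and then apply the restricted weak type bound to $\chi_{F_j}$. However $T$ is only assumed linear, not positive or monotone, so the pointwise domination of $f_j$ by a multiple of $\chi_{F_j}$ gives no control on $Tf_j$ in terms of $T\chi_{F_j}$; the bound $(T\chi_{F_j})^*(s)\lesssim \mu(F_j)^{1/p_i}s^{-1/q_i}$ is about the wrong object. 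To make linearity do the work, one must decompose $f$ \emph{exactly} as a nonnegative linear combination of indicators: for a nonnegative simple function taking distinct values $a_1 > a_2 > \dots > a_n > 0$, write $f = \sum_{k=1}^n (a_k - a_{k+1})\chi_{E_k}$ with $E_k=\{f\ge a_k\}$ nested (and $a_{n+1}=0$); then $Tf = \sum_k (a_k-a_{k+1})T(\chi_{E_k})$ holds identically and every term is covered by the hypothesis. With that replacement the rest of your outline (threshold split in $k$, subadditivity of rearrangement, discrete Hardy on the geometric tails) can be carried through; as written, with the $f_j$ decomposition, it does not close.
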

For the proof we refer readers to \cite[Theorem 1.4.19]{8}. Note that the above result can be improved to strong type estimate, i.e. $\|T(f)\|_q \lesssim \|f\|_p$, only when $q\ge p$ by \textit{nested} property.

\begin{remark}\label{reinterpolation}
We can regain the boundedness of $R$ for $1<p<n^*$ by the following procedure. First by Theorem \ref{thmHA} we know that $R$ is weak type $(1,1)$ which directly implies $R$ is restricted weak $(1,1)$. By Theorem \ref{thmDYH} and \textit{nested} property of Lorentz spaces, we know that 
\begin{equation*}
    \|R(f)\|_{(n^*,\infty)}\lesssim \|R(f)\|_{(n^*,1)} \lesssim \|f\|_{(n^*,1)}
\end{equation*}
which implies $R$ is restricted weak $(n^*,n^*)$ since for any set $A$ with $\mu(A)<\infty$ we have
\begin{equation*}
    \|\chi_A\|_{(n^*,1)} = \int_0^\infty t^{1/n^*}\chi_A^*(t)\frac{dt}{t} = \int_0^\infty t^{1/n^* -1}\chi_{[0,\mu(A)]}(t)dt = n^*\mu(A)^{1/n^*}
\end{equation*}

Now an application of Theorem \ref{thmLSI} with $r=p$ for each $1<p<n^*$, we regain the boundedness of Riesz transform.
\end{remark}

\subsection{Hardy-Hilbert type inequalities on homogeneous spaces}

In \cite[Theorem 319]{HLP}, authors consider integral operators with homogeneous kernels on space $\mathbb{R}^+$. Before we state their original result, we recall that a kernel $K(x,y)$ is homogeneous of degree $\delta \in \mathbb{R}$ if 
\begin{equation*}
    K(\lambda x,\lambda y) = \lambda^{\delta} K(x,y) \quad \forall \lambda>0, \quad \forall x,y
\end{equation*}

\begin{theorem}[Hardy-Littlewood-P\'olya]\label{thmHLP1}
Suppose $1<p<\infty$ and $K(x,y)$ is non-negative defined on $\mathbb{R}^+ \times \mathbb{R}^+$ with homogeneous degree $-1$ such that
\begin{equation*}
    \int_0^\infty K(x,1)x^{-1/p}dx = \int_0^\infty K(1,y)y^{-1/p'}dy = k
\end{equation*}
then we have
\begin{equation*}
    \int_0^\infty \int_0^\infty K(x,y)f(x)g(y)dxdy \le k\left(\int_0^\infty |f(x)|^pdx\right)^{1/p} \left(\int_0^\infty |g(y)|^{p'}dy\right)^{1/p'}
\end{equation*}
\end{theorem}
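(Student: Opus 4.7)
The plan is the classical weighted Hölder argument: introduce a multiplicative weight that exploits the homogeneity of $K$ so that, after applying Hölder's inequality, the resulting double integrals decouple into single integrals which the hypothesis evaluates explicitly. Since $1/p+1/p'=1$, the natural splitting writes $Kfg$ as a product of two factors whose $p$-th and $p'$-th powers produce integrands amenable to a scaling substitution.

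Concretely, first I would write
$$K(x,y)f(x)g(y)=\Bigl[K(x,y)^{1/p}f(x)(x/y)^{1/(pp')}\Bigr]\cdot\Bigl[K(x,y)^{1/p'}g(y)(y/x)^{1/(pp')}\Bigr],$$
noting that the auxiliary weights $(x/y)^{1/(pp')}$ and $(y/x)^{1/(pp')}$ are mutual reciprocals and so introduce no genuine change. Applying Hölder's inequality with conjugate exponents $p,p'$ on $\mathbb{R}^+\times\mathbb{R}^+$ then bounds $\int\!\!\int Kfg$ by
$$\left(\int_0^\infty\!\!\int_0^\infty K(x,y)|f(x)|^p\,(x/y)^{1/p'}dxdy\right)^{1/p}\left(\int_0^\infty\!\!\int_0^\infty K(x,y)|g(y)|^{p'}(y/x)^{1/p}dxdy\right)^{1/p'}.$$

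Next, I would evaluate the inner integrals using homogeneity. For the first factor, fix $x$ and substitute $u=y/x$; the identity $K(x,y)=x^{-1}K(1,y/x)$ together with $dy=x\,du$ reduces the $y$-integral to $\int_0^\infty K(1,u)u^{-1/p'}du=k$ by hypothesis, leaving $k\int_0^\infty|f(x)|^p\,dx=k\|f\|_p^p$. A symmetric substitution $v=x/y$ handles the second factor via the other hypothesis $\int_0^\infty K(v,1)v^{-1/p}dv=k$ and produces $k\|g\|_{p'}^{p'}$. Raising to the appropriate powers and multiplying yields the claimed bound $k\|f\|_p\|g\|_{p'}$.

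There is no substantive obstacle here: the only creative step is guessing the weight exponent $1/(pp')$, and this is forced by the requirement that the two single-variable integrals emerging after Hölder be precisely the ones appearing in the hypothesis. The argument is essentially the textbook proof of the Hardy--Hilbert inequality reworked for a general degree $-1$ kernel; the real work in this section will come in the subsequent generalisations (to inhomogeneous spaces and operators acting between spaces of different dimensions) needed for the Riesz transform application, rather than in Theorem \ref{thmHLP1} itself.
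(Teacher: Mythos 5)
Your proof is correct; the computation of the weight $(x/y)^{1/(pp')}$, the application of H\"older, and the change of variables exploiting $K(x,y)=x^{-1}K(1,y/x)=y^{-1}K(x/y,1)$ all check out, yielding exactly $k\|f\|_p\|g\|_{p'}$. However, your route differs from the paper's: the paper does not actually prove Theorem~\ref{thmHLP1} directly but cites it from Hardy--Littlewood--P\'olya, and then establishes the strictly more general Theorem~\ref{thmHLP2} (on weighted spaces $L^p(\mathbb{R}^+,r^{d-1}dr)$) by recasting the operator as a convolution on the multiplicative group $(\mathbb{R}^+,dx/x)$ and invoking Young's convolution inequality. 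Concretely, the paper writes $\mathcal{K}f(x)=x^{d_1/p'-n}(h*g)(x)$ with $g=x^{d_1/p}f$ and $h=x^{n-d_1/p'}K(\cdot,1)$, so that $L^1*L^p\subset L^p$ on the group finishes the job. Your weighted-H\"older argument is the classical textbook proof and is certainly correct, but it does not extend quite as transparently to the weighted and $L^p\to L^q$ settings ($p\neq q$, different dimensions $d_1,d_2$) that the paper needs downstream (Theorem~\ref{thmHLP2}, Corollary~\ref{corHLP3}, Theorem~\ref{thmHLPG}); the multiplicative-convolution viewpoint makes those generalizations a one-line application of Young's inequality. So: correct proof, elementary and self-contained, but structurally different from the machinery the paper sets up to handle the inhomogeneous generalizations that follow.
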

Particularly, when $K(x,y) = \frac{1}{x+y}$, the theorem proves the $L^p$ boundedness of the Hilbert transform on half line. Moreover, for $K(x,y) = \frac{1}{x} \chi_{y\le x}(x)$, Theorem \ref{thmHLP1} is the famous Hardy's inequality. In the following, we call an operator "\textit{Hardy-Hilbert type}" if it is defined via a homogeneous kernel.  
\bigskip

For $d_1,d_2\ge 1$, we define measures $d\mu_j = r^{d_j-1}dr$ for $j=1,2$. And we consider multiplicative group $(\mathbb{R}^+,\frac{dx}{x})$. For simplicity, we still use $*$ to denote the convolution in this group i.e.
\begin{equation*}
    (f*g)(x):= \int_0^\infty f(y)g(x/y)\frac{dy}{y} = \int_0^\infty f(x/y)g(y)\frac{dy}{y}
\end{equation*}
To generalize Theorem \ref{thmHLP1}, we will use the following result. See \cite[Page from 18]{8} for more detail.
\begin{theorem}[Young's convolution inequality]\label{thmYoung}
Let $1\le p,q,r\le \infty$ such that
\begin{equation*}
    1+\frac{1}{q} = \frac{1}{p} + \frac{1}{r}
\end{equation*}
then for all $f\in L^p(\mathbb{R}^+,dx/x)$ and $g\in L^r(\mathbb{R}^+,dx/x)$ we have
\begin{equation*}
    \|f*g\|_{L^q(\mathbb{R}^+,dx/x)} \le \|g\|_{L^r(\mathbb{R}^+,dx/x)} \|f\|_{L^p(\mathbb{R}^+,dx/x)}
\end{equation*}
\end{theorem}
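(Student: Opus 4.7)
The plan is to reduce this statement to the classical Young's convolution inequality on the additive group $(\mathbb{R},+)$ via the logarithmic change of variables, which identifies $(\mathbb{R}^+,\cdot)$ with $(\mathbb{R},+)$ as topological groups and the Haar measure $dx/x$ with Lebesgue measure $du$. Specifically, for any measurable function $h$ on $\mathbb{R}^+$ I would set $\tilde h(u):=h(e^u)$ for $u\in\mathbb{R}$. The substitution $x=e^u$, $dx/x=du$ gives
\[
\int_0^\infty |h(x)|^p\,\frac{dx}{x} \;=\; \int_{-\infty}^\infty |\tilde h(u)|^p\,du,
\]
so $\|h\|_{L^p(\mathbb{R}^+,dx/x)}=\|\tilde h\|_{L^p(\mathbb{R},du)}$ for every $1\le p\le \infty$ (with the usual essential-supremum convention when $p=\infty$).

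Next, I would check that multiplicative convolution transforms into ordinary convolution under this substitution. Writing $x=e^u$ and $y=e^v$, the definition of $f*g$ from the excerpt yields
\[
(f*g)(e^u) \;=\; \int_0^\infty f(y)\,g(e^u/y)\,\frac{dy}{y} \;=\; \int_{-\infty}^\infty \tilde f(v)\,\tilde g(u-v)\,dv \;=\; (\tilde f *_{\mathbb{R}} \tilde g)(u),
\]
so $\widetilde{f*g}=\tilde f*_{\mathbb{R}}\tilde g$ almost everywhere. Combining the two identifications, the desired estimate $\|f*g\|_{L^q(\mathbb{R}^+,dx/x)}\le \|f\|_{L^p}\|g\|_{L^r}$ under the index relation $1+1/q=1/p+1/r$ becomes exactly the classical Young convolution inequality on $\mathbb{R}$.

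The last step is simply to invoke that classical inequality, whose usual proof splits $|\tilde f(v)|\,|\tilde g(u-v)|$ as a product of three factors and applies H\"older's inequality with three exponents followed by Fubini's theorem; the endpoint cases $p,q,r\in\{1,\infty\}$ follow either by direct calculation or by a density argument. I do not expect a genuine obstacle here: the whole theorem is a special case of Young's inequality on the locally compact abelian group $(\mathbb{R}^+,\cdot)$ equipped with its Haar measure $dx/x$, and the exponential isomorphism with $(\mathbb{R},+)$ reduces it to a result that is completely standard. The only points meriting mild care are measurability and the verification of the index identity through the change of variables, both of which are routine.
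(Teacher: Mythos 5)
Your proof is correct. The paper itself does not supply a proof of this theorem; it only records the statement with a pointer to Grafakos's text, where Young's convolution inequality is proved for general locally compact groups and their Haar measures, a framework that covers $(\mathbb{R}^+,\cdot,\,dx/x)$ directly. Your route via the exponential change of variables $x=e^u$ is a clean and valid alternative: the map is a measure-preserving group isomorphism from $(\mathbb{R}^+,\cdot,\,dx/x)$ to $(\mathbb{R},+,du)$, it is isometric on every $L^p$ in the scale, and it intertwines multiplicative convolution with ordinary convolution, so the claim reduces word for word to the classical Young inequality on the line. The only thing this buys over the citation is self-containedness for a reader who knows Young only in the Euclidean case; conversely the general-group proof avoids the change of variables at the cost of a bit more abstraction. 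The minor points you flag, measurability and the exponent bookkeeping, are indeed routine and do not hide any gap.
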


With Theorem \ref{thmYoung} in mind, we generalize Theorem \ref{thmHLP1} as follows.

\begin{theorem}\label{thmHLP2}
Let $1<p<\infty$, $K(x,y)$ is non-negative with homogeneous degree of $-n$ where $n=d_2/p + d_1/{p'}$ and
\begin{equation*}
    \int_0^\infty K(x,1)x^{d_2/p -1}dx = \int_0^\infty K(1,y)y^{d_1/{p'}-1}dy = k<\infty
\end{equation*}
Then the operator
\begin{equation*}
    \mathcal{K}(f)(x)= \int_0^\infty K(x,y)f(y)d\mu_1(y)
\end{equation*}
is bounded from $L^p(\mathbb{R}^+,d\mu_1)$ to $L^p(\mathbb{R}^+,d\mu_2)$ with norm at most $k$.

\end{theorem}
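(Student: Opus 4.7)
The plan is to reduce the operator $\mathcal{K}$ to a convolution on the multiplicative group $(\mathbb{R}^+, dx/x)$ and then invoke Theorem \ref{thmYoung} with $r=1$.

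First I would rescale so that the weighted $L^p$ norms become ordinary $L^p$ norms with respect to the Haar measure $dx/x$. Setting $F(x) = x^{d_1/p} f(x)$ and $G(x) = x^{d_2/p} \mathcal{K}(f)(x)$, a direct computation gives
$$\|F\|_{L^p(\mathbb{R}^+, dx/x)} = \|f\|_{L^p(\mathbb{R}^+, d\mu_1)}, \qquad \|G\|_{L^p(\mathbb{R}^+, dx/x)} = \|\mathcal{K}(f)\|_{L^p(\mathbb{R}^+, d\mu_2)},$$
so it suffices to prove $\|G\|_{L^p(dx/x)} \le k \|F\|_{L^p(dx/x)}$.

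Next I would rewrite $G$ as a convolution. Using the homogeneity $K(sx, sy) = s^{-n} K(x,y)$ together with the substitution $y = x/s$ in the defining integral $\mathcal{K}(f)(x) = \int_0^\infty K(x,y) f(y) y^{d_1-1}dy$, the hypothesis $n = d_2/p + d_1/p'$ is precisely what collapses the $x$-powers and produces
$$G(x) = \int_0^\infty K(s,1)\, s^{d_2/p}\, F(x/s)\, \frac{ds}{s} = (H*F)(x), \qquad H(s) := K(s,1)\, s^{d_2/p}.$$
The hypothesis on the integrals of $K$ then yields
$$\|H\|_{L^1(\mathbb{R}^+, dx/x)} = \int_0^\infty K(s,1)\, s^{d_2/p - 1}\,ds = k.$$

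Finally, I would apply Theorem \ref{thmYoung} with $r = 1$ and $q = p$, noting that $1 + 1/p = 1/p + 1$, to obtain
$$\|G\|_{L^p(dx/x)} \le \|H\|_{L^1(dx/x)}\, \|F\|_{L^p(dx/x)} = k\, \|F\|_{L^p(dx/x)},$$
which is the desired bound. The only delicate point is the exponent bookkeeping in step two: one must verify that the powers of $x$ contributed by the $x^{d_2/p}$ prefactor, by the homogeneity of $K$, and by the Jacobian of $y = x/s$ all cancel against the weighted measure $y^{d_1-1}dy$, leaving a pure convolution kernel in $s$. This cancellation is exactly the content of the relation $n = d_2/p + d_1/p'$. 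Beyond this calculation there is no deeper obstacle; once the reduction to Young's inequality is in place, the constant $k$ appears on the nose.
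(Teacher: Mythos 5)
Your proposal is correct and takes essentially the same route as the paper: both reduce the weighted $L^p$ bound to Young's convolution inequality on the multiplicative group $(\mathbb{R}^+,dx/x)$ by the same change of variables, with your $F(x)=x^{d_1/p}f(x)$ and $H(s)=s^{d_2/p}K(s,1)$ coinciding with the paper's $g$ and $h$ (note $n-d_1/p'=d_2/p$), and the identity $G=H*F$ matching the paper's $\mathcal{K}(f)(x)=x^{d_1/p'-n}(h*g)(x)$.
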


\begin{proof}
Let $g(x) = x^{d_1/p}f(x)$ and $h(x) = x^{n-d_1/{p'}}K(x,1)$. Then the operator applies to function $f$ is
\begin{gather*}
    \int_0^\infty K(x,y) f(y) y^{d_1} \frac{dy}{y} = \int_0^\infty y^{d_1/{p'}-n}K(x/y,1) \left(y^{d_1/p} f(y)\right) \frac{dy}{y}\\
    = x^{d_1/{p'}-n} \int_0^\infty h(x/y) g(y) \frac{dy}{y} = x^{d_1/{p'}-n} (h*g)(x)
\end{gather*}
Thus we have by Theorem \ref{thmYoung},
\begin{gather*}
    \|\mathcal{K}(f)\|_{L^p(\mathbb{R}^+,d\mu_2)} = \left(\int_0^\infty x^{p(d_1/{p'}-n)+d_2}|(h*g)(x)|^p \frac{dx}{x}\right)^{1/p}\\
    = \|h*g\|_{L^p(\mathbb{R}^+,\frac{dx}{x})} \le \|h\|_{L^1(\mathbb{R}^+,\frac{dx}{x})} \|g\|_{L^p(\mathbb{R}^+,\frac{dx}{x})} = \|h\|_{L^1(\mathbb{R}^+,\frac{dx}{x})} \|f\|_{L^p(\mathbb{R}^+,d\mu_1)}
\end{gather*}
Note that
\begin{equation*}
    \|h\|_{L^1(\mathbb{R}^+,\frac{dx}{x})} = \int_0^\infty x^{n-d_1/{p'}-1}K(x,1)dx = k
\end{equation*}
which completes the proof.

\end{proof}

Immediately, we have the following corollary.

\begin{corollary}\label{corHLP3}
Let $1<p\le q$ and $1+1/q = 1/p + 1/r$. $K(x,y)$ is non-negative with homogeneous degree of $-n$ where $n = d_1/{p'} + d_2/q$ and
\begin{equation*}
    \int_0^\infty K(x,1)^r x^{\frac{rd_2}{q}-1}dx = \int_0^\infty K(1,y)^r y^{\frac{rd_1}{p'}-1}dy = k
\end{equation*}
Then
\begin{equation*}
    \|\mathcal{K}(f)\|_{L^q(\mathbb{R}^+,d\mu_2)} \le k^{1/r} \|f\|_{L^p(\mathbb{R}^+,d\mu_1)}
\end{equation*}
\end{corollary}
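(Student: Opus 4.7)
The plan is to follow the template of the proof of Theorem \ref{thmHLP2}, but apply Young's convolution inequality (Theorem \ref{thmYoung}) with general exponent $r$ rather than $r=1$. The key observation is that homogeneity lets us rewrite $\mathcal{K}(f)$ as a multiplicative convolution on $(\mathbb{R}^+, dx/x)$ up to a power of $x$, and then the mismatch between the source exponent $p$ and target exponent $q$ is absorbed exactly by the Young exponent $r$ determined by $1 + 1/q = 1/p + 1/r$.

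First I would set $g(y) = y^{d_1/p} f(y)$ and $h(y) = y^{d_2/q} K(y,1)$. Using $K(x,y) = y^{-n} K(x/y, 1)$ together with the balance condition $n = d_1/p' + d_2/q$, a change of variables shows that $\mathcal{K}(f)(x) = x^{-d_2/q}(h*g)(x)$, where $*$ is the multiplicative convolution on $(\mathbb{R}^+, dx/x)$. The exponent $d_2/q$ in the definition of $h$ is forced by the requirement that the measure $y^{d_1-1}dy$ combines with the remaining $y$-factors to produce $dy/y$; this is exactly where the homogeneity balance $n = d_1/p' + d_2/q$ is used.

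Next I would transfer norms. The substitution $g(y) = y^{d_1/p} f(y)$ gives $\|g\|_{L^p(\mathbb{R}^+,dx/x)} = \|f\|_{L^p(\mathbb{R}^+,d\mu_1)}$, and the prefactor $x^{-d_2/q}$ precisely absorbs the weight $x^{d_2-1}dx$ into $dx/x$, so that $\|\mathcal{K}(f)\|_{L^q(\mathbb{R}^+,d\mu_2)} = \|h*g\|_{L^q(\mathbb{R}^+,dx/x)}$. Applying Theorem \ref{thmYoung} with the given relation $1 + 1/q = 1/p + 1/r$ (noting that $p \le q$ ensures $r \ge 1$, which is needed for Young's inequality on this multiplicative group) yields the bound by $\|h\|_{L^r(\mathbb{R}^+,dx/x)} \|g\|_{L^p(\mathbb{R}^+,dx/x)}$. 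A direct computation closes the argument: $\|h\|_{L^r(\mathbb{R}^+,dx/x)}^r = \int_0^\infty K(x,1)^r x^{rd_2/q - 1}\,dx = k$.

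I do not expect any serious obstacle; the proof is essentially an exponent-tracking exercise layered on top of Theorem \ref{thmHLP2}. The only point requiring a little care is verifying that the two stated hypotheses on $K$ (the integrals in $x$ and in $y$ both equal $k$) are compatible with the single choice $h(y) = y^{d_2/q}K(y,1)$; this follows because $K$ is homogeneous of degree $-n$, so the symmetric integral $\int_0^\infty K(1,y)^r y^{rd_1/p' - 1}dy$ reduces to $\int_0^\infty K(x,1)^r x^{rd_2/q - 1}dx$ after the substitution $x = 1/y$ and use of the identity $rd_1/p' + rd_2/q = rn$.
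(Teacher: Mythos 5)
Your proof is correct, and it is precisely the argument the paper intends when it labels this as an immediate corollary of Theorem~\ref{thmHLP2}: the identical change of variables $g(y)=y^{d_1/p}f(y)$, $h(y)=y^{n-d_1/p'}K(y,1)=y^{d_2/q}K(y,1)$ converts $\mathcal{K}(f)$ to $x^{-d_2/q}(h*g)(x)$ on $(\mathbb{R}^+,dx/x)$, after which one invokes Theorem~\ref{thmYoung} with the given exponent $r$ instead of $r=1$. Your side remarks — that $p\le q$ forces $r\ge 1$, that the weight $x^{d_2-1}dx$ combined with $x^{-qd_2/q}$ collapses to $dx/x$, and that the two integral hypotheses on $K$ agree by homogeneity under $x\mapsto 1/y$ — are all accurate and correctly placed, so nothing is missing.
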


Also, we can extend the above results to higher dimensions.

\begin{theorem}\label{thmHLPG}
Let $1<p\le q$ and $1+1/q = 1/p + 1/r$. $K(x,y)$ is a kernel defined on $\mathbb{R}^{d_2}\times \mathbb{R}^{d_1} \setminus (0,0)$ which is radial and non-negative with homogeneous degree of $-n$ where $n = d_1/p' + d_2/q$. Set $K_0(|x|,|y|) = K(x,y)$ be the function defined on $\mathbb{R}^+ \times \mathbb{R}^+$ and
\begin{equation*}
    \int_0^\infty K_0(s,1)^r s^{\frac{rd_2}{q}-1}ds = \int_0^\infty K_0(1,t)^r t^{\frac{rd_1}{p'}-1}dt = k
\end{equation*}
Then
\begin{equation*}
    \|\mathcal{K}(f)\|_{L^q(\mathbb{R}^{d_2})} \lesssim  \|f\|_{L^p(\mathbb{R}^{d_1})}
\end{equation*}
where 
\begin{equation*}
    \mathcal{K}(f)(x):= \int_{\mathbb{R}^{d_1}} K(x,y)f(y)dy
\end{equation*}

\end{theorem}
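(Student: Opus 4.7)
The plan is to reduce the higher-dimensional statement to the one-dimensional Corollary \ref{corHLP3} by exploiting the radial nature of $K$ in both variables. Since $K(x,y)=K_0(|x|,|y|)$ depends on $x$ only through $s:=|x|$, the output $\mathcal{K}(f)(x)=:F(s)$ is automatically radial on $\mathbb{R}^{d_2}$, so
\begin{equation*}
\|\mathcal{K}(f)\|_{L^q(\mathbb{R}^{d_2})}^q \;=\; |S^{d_2-1}|\int_0^\infty |F(s)|^q\, s^{d_2-1}\,ds \;=\; |S^{d_2-1}|\cdot\|F\|_{L^q(\mathbb{R}^+,d\mu_2)}^q.
\end{equation*}

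To compare $F$ with a one-dimensional operator I would pass to polar coordinates $y=t\omega$ on $\mathbb{R}^{d_1}$ and apply Hölder's inequality on $S^{d_1-1}$. Setting $g(t):=\bigl(\int_{S^{d_1-1}}|f(t\omega)|^p\,d\sigma(\omega)\bigr)^{1/p}$, Hölder gives $\int_{S^{d_1-1}}|f(t\omega)|\,d\sigma(\omega)\le |S^{d_1-1}|^{1/p'}g(t)$, whence
\begin{equation*}
|F(s)| \;\le\; |S^{d_1-1}|^{1/p'}\int_0^\infty K_0(s,t)\,g(t)\,t^{d_1-1}\,dt \;=\; |S^{d_1-1}|^{1/p'}\,\mathcal{K}_0(g)(s),
\end{equation*}
where $\mathcal{K}_0$ is precisely the one-dimensional operator from Corollary \ref{corHLP3}. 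A straightforward Fubini on the $y$-sphere also shows $\|g\|_{L^p(\mathbb{R}^+,d\mu_1)}=\|f\|_{L^p(\mathbb{R}^{d_1})}$.

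Applying Corollary \ref{corHLP3} to $\mathcal{K}_0$, whose homogeneity degree $-n$ with $n=d_1/p'+d_2/q$ and normalisation constant $k$ are exactly those hypothesised in the statement, yields $\|\mathcal{K}_0(g)\|_{L^q(\mathbb{R}^+,d\mu_2)}\le k^{1/r}\|g\|_{L^p(\mathbb{R}^+,d\mu_1)}$. Combining the three displayed estimates produces
\begin{equation*}
\|\mathcal{K}(f)\|_{L^q(\mathbb{R}^{d_2})} \;\le\; |S^{d_2-1}|^{1/q}\,|S^{d_1-1}|^{1/p'}\,k^{1/r}\,\|f\|_{L^p(\mathbb{R}^{d_1})},
\end{equation*}
which is the claimed bound (with an explicit implicit constant depending only on the dimensions).

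The argument is essentially bookkeeping: the radial symmetry of $K$ forces $\mathcal{K}(f)$ to be radial in $x$, while Hölder on the $y$-sphere replaces $f$ by a radial majorant $g$ of identical $L^p$ norm, leaving a one-dimensional problem already settled by Corollary \ref{corHLP3}. There is no substantive obstacle; the only thing to watch is that the assumption $p\le q$ (needed to invoke Corollary \ref{corHLP3} via Young's convolution inequality on the multiplicative group $(\mathbb{R}^+,dx/x)$) is built into the hypotheses of the theorem.
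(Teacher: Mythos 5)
Your argument is correct and reaches the same conclusion as the paper, but it is organized differently. The paper writes $\mathcal{K}(f)$ as a spherical average $C_{d_1}\int_{\mathbb{S}^{d_1-1}}\mathcal{K}_\theta(f)\,d\theta$ of one-dimensional operators, pulls the $\theta$-integral outside the $L^q(\mathbb{R}^{d_2})$ norm via Minkowski's integral inequality, applies Corollary \ref{corHLP3} separately for each direction $\theta$, and only at the very end uses H\"older (equivalently, Jensen) on the sphere to recover $\|f\|_{L^p(\mathbb{R}^{d_1})}$. You instead apply H\"older on the sphere \emph{first}, before any operator is touched, to produce a single radial majorant $g(t)=\bigl(\int_{\mathbb{S}^{d_1-1}}|f(t\omega)|^p\,d\sigma\bigr)^{1/p}$ with $\|g\|_{L^p(\mathbb{R}^+,d\mu_1)}=\|f\|_{L^p(\mathbb{R}^{d_1})}$, and then invoke Corollary \ref{corHLP3} exactly once. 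This avoids Minkowski entirely, makes the implied constant $|S^{d_2-1}|^{1/q}|S^{d_1-1}|^{1/p'}k^{1/r}$ completely explicit, and is arguably the cleaner of the two reductions; the paper's direction-by-direction formulation, on the other hand, sits a bit closer in spirit to a method-of-rotations argument. Both hinge on the same one-dimensional Corollary \ref{corHLP3} and on $p\le q$ (via Young's inequality, as you note), so the two proofs are equivalent in substance.
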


\begin{proof}
Note that
\begin{gather*}
    \mathcal{K}(f)(x) = \int_{\mathbb{R}^{d_1}}K(x,y)f(y)dy = C_{d_1} \int_{\mathbb{S}^{d_1-1}} \int_0^\infty K_0(|x|,t)f(\theta t) t^{d_1-1}dt d\theta\\
    := C_{d_1} \int_{\mathbb{S}^{d_1-1}} \mathcal{K}_\theta (f)(x) d\theta
\end{gather*}
Since $K$ is radial, thus for each $\theta \in \mathbb{S}^{d_1-1}$, $\mathcal{K}_\theta (f)$ is radial. Hence
\begin{gather*}
    \|\mathcal{K}_\theta (f)\|_{L^q(\mathbb{R}^{d_2})} = \left(\int_{\mathbb{R}^{d_2}}|\mathcal{K}_\theta (f)(x)|^q dx \right)^{1/q}\\
    = C_{q,d_2} \left(\int_0^\infty |\mathcal{K}_\theta (f)(|s|)|^q s^{d_2-1}ds\right)^{1/q} = C_{q,d_2} \|\mathcal{K}_\theta (f)(|.|)\|_{L^q(\mathbb{R}^+,d\mu_2)}
\end{gather*}
By Corollary \ref{corHLP3}, we have
\begin{equation*}
    \|\mathcal{K}_\theta (f)(|.|)\|_{L^q(\mathbb{R}^+,d\mu_2)} \le k^{1/r} \|f(\theta (.))\|_{L^p(\mathbb{R}^+,d\mu_1)}
\end{equation*}
Therefore 
\begin{gather*}
    \|\mathcal{K}(f)\|_{L^q(\mathbb{R}^{d_2})} \approx \|\int_{\mathbb{S}^{d_1-1}}\mathcal{K}_\theta (f)(x) d\theta \|_{L^q(\mathbb{R}^{d_2})} \le \int_{\mathbb{S}^{d_1-1}} \|\mathcal{K}_\theta (f)\|_{L^q(\mathbb{R}^{d_2},dx)} d\theta\\
    \lesssim \int_{\mathbb{S}^{d_1-1}} \|\mathcal{K}_\theta (f)(|.|)\|_{L^q(\mathbb{R}^+,d\mu_2(x))}d\theta \lesssim \int_{\mathbb{S}^{d_1-1}} \left(\int_0^\infty |f(\theta t)|^p t^{d_1-1}dt\right)^{1/p}d\theta\\
    \lesssim \left(\int_{\mathbb{S}^{d_1-1}} \int_0^\infty |f(\theta t)|^p t^{d_1-1}dtd\theta \right)^{1/p} \approx \|f\|_{L^p(\mathbb{R}^{d_1})}
\end{gather*}

\end{proof}

\subsection{Hardy-Hilbert type inequalities on inhomogeneous spaces}
To prove our main results Theorem \ref{thmDYH}, we need an "endpoint-type" version of Theorem \ref{thmHLP2} in some different settings. The initial point is to consider "\textit{Hardy-Hilbert type}" operator $\mathcal{K}$ acting between measure spaces $(X,\mu_1)$ and $(X,\mu_2)$ with kernel: 
\begin{equation*}
    K(x,y) = 
    \begin{cases}
       x^{-\alpha}y^{-\beta}, & x\le y\\
       x^{-\alpha'}y^{-\beta'}, & x> y
    \end{cases}
\end{equation*}
where $\alpha+\beta = \alpha'+\beta'$ satisfying $\alpha>0, 0<\beta\le d_1$ and $0<\alpha'\le d_2, \beta'>0$. Let $X=[1,\infty)$, $d\mu_j$ as defined in the last subsection and we want to study the $(p,q)$ boundedness of $\mathcal{K}$. That is we aim to find $(p,q)$ such that $\mathcal{K}$ maps $L^p(X,d\mu_1)$ to $L^q(X,d\mu_2)$.

Split $\mathcal{K}$ into two parts,
\begin{gather*}
    R_1(f)(x) = x^{-\alpha} \int_x^\infty y^{-\beta}f(y)d\mu_1(y) \\
    R_2(f)(x) = x^{-\alpha'}\int_1^x y^{-\beta'}f(y)d\mu_1(y) 
\end{gather*}
Observe that the adjoint operator of $R_1$:
\begin{equation*}
    R_1^*(f)(x) = x^{-\beta}\int_1^x y^{-\alpha}f(y)d\mu_2(y)
\end{equation*}
is exactly $R_2$ by replacing $\alpha \to \beta'$, $\beta \to \alpha'$, $d_1\to d_2$ and $d_2\to d_1$. Thus instead of studying $\mathcal{K}$ itself, it is enough to study the boundedness of $R_1$. And the results for $R_2$ follows by duality and replacing indexes. $\mathcal{K}$ has also been studied in \cite[Proposition 5.1]{GH} and \cite[Lemma 5.4]{13}. We mention that the results in the following do not directly imply Theorem \ref{thmDYH}, but the ideas are exactly the same.

\begin{theorem}\label{thmWE}
 For $0<\beta<d_1$, $R_1$ is bounded from $L^{p,1}(X,d\mu_1)$ to $L^{q,\infty}(X,d\mu_2)$ for $p = \frac{d_1}{d_1-\beta}$ and $q\ge \frac{d_2}{\alpha}$. Moreover, $R_1$ is of weak type $(1,r)$ for all $r\ge \frac{d_2}{\alpha+\beta}$ for all $0<\beta \le d_1$.
\end{theorem}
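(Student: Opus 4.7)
The plan is to bound $|R_1(f)(x)|$ pointwise in $x$ by a constant times $x^{-\alpha}$ multiplied by an appropriate norm of $f$, and then read off the target Lorentz norm from the function $x^{-\alpha}$ on $(X,\mu_2)$. First, apply the Hardy-Littlewood inequality (Lemma \ref{leHL}) to $f$ paired with the kernel $g_x(y) := y^{-\beta}\chi_{[x,\infty)}(y)$, with rearrangements taken with respect to $\mu_1$. Combining this with the Hölder-type inequality for Lorentz quasi-norms recorded just after Lemma \ref{leHL} gives
\begin{equation*}
|R_1(f)(x)| \le x^{-\alpha}\int_0^\infty f^*(t)\,g_x^*(t)\,dt \le C\, x^{-\alpha}\|f\|_{(p,1)}\|g_x\|_{(p',\infty)}.
\end{equation*}
With $p = d_1/(d_1-\beta)$, and hence $p' = d_1/\beta$, a direct computation of the distribution function of $g_x$ under $\mu_1$ yields $d_{g_x}(s) = (s^{-d_1/\beta}-x^{d_1})/d_1$ on its support, so $g_x^*(t) = (d_1 t + x^{d_1})^{-\beta/d_1}$, and therefore
\begin{equation*}
\|g_x\|_{(p',\infty)} = \sup_{t>0} t^{\beta/d_1}(d_1 t+x^{d_1})^{-\beta/d_1} \le d_1^{-\beta/d_1},
\end{equation*}
uniformly in $x \ge 1$.

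Second, this produces the pointwise bound $|R_1(f)(x)| \le C\,x^{-\alpha}\|f\|_{(p,1)}$, so the first claim reduces to checking that the function $x \mapsto x^{-\alpha}$ belongs to $L^{q,\infty}(X,d\mu_2)$ for every $q\ge d_2/\alpha$. The distribution function of $x^{-\alpha}$ on $(X,\mu_2)$ is $(s^{-d_2/\alpha}-1)/d_2$ for $s \in (0,1)$ and vanishes for $s\ge 1$, so $\sup_s s\cdot d(s)^{1/q}$ is finite precisely when $q\ge d_2/\alpha$. Multiplying through delivers the first half of the theorem.

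Third, for the weak type $(1,r)$ statement with $0<\beta\le d_1$ and $r\ge d_2/(\alpha+\beta)$ (including the endpoint $\beta = d_1$, which is no longer covered by the Hardy-Littlewood step) the argument simplifies: the elementary estimate $y^{-\beta}\le x^{-\beta}$ on $[x,\infty)$ gives
\begin{equation*}
|R_1(f)(x)| \le x^{-(\alpha+\beta)}\|f\|_{L^1(X,d\mu_1)},
\end{equation*}
and the same distribution-function calculation, applied now to $x^{-(\alpha+\beta)}$, shows $\|x^{-(\alpha+\beta)}\|_{L^{r,\infty}(X,d\mu_2)}<\infty$ if and only if $r \ge d_2/(\alpha+\beta)$.

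The main obstacle is little more than bookkeeping with rearrangements on the weighted space $([1,\infty), r^{d_j-1}\,dr)$. The one genuine point to check is that the Lorentz estimate for $g_x$ is \emph{uniform} in $x\ge 1$ (essentially because $g_x^*(t) \le (d_1 t)^{-\beta/d_1}$ bounds the rearrangement independently of $x$); this is what allows all the spatial decay to be concentrated into the prefactor $x^{-\alpha}$. Conceptually the theorem says that, once $f$ is measured in a Lorentz space strong enough to tame the $y^{-\beta}$ singularity, $R_1$ acts pointwise like multiplication by $x^{-\alpha}$.
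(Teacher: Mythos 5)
Your proof is correct and follows essentially the same route as the paper's: apply the Hardy--Littlewood rearrangement inequality and the Lorentz Hölder inequality to $f$ paired with $g_x(y)=y^{-\beta}\chi_{[x,\infty)}(y)$, check uniformity in $x$ of $\|g_x\|_{(p',\infty)}$, and then read off the Lorentz norm of $x^{-\alpha}$ (resp. $x^{-(\alpha+\beta)}$ for the weak type $(1,r)$ part) over $(X,d\mu_2)$. The only cosmetic difference is that you compute $g_x^*$ explicitly and take $\sup_t t^{1/p'}g_x^*(t)$, while the paper works directly with the distribution function $\sup_\lambda \lambda\,d_{F_x}(\lambda)^{1/p'}$; these are equivalent characterizations of the $L^{p',\infty}$ quasi-norm.
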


\begin{proof}
 Let $x\ge 1$, and $f\in L^{p,1}(X,d\mu_1)$. By Lemma \ref{leHL},
\begin{gather*}
    |R_1(f)(x)|\le x^{-\alpha} \int_1^\infty |f(y)| y^{-\beta}\chi_{[x,\infty)}(y) d\mu_1(y)\\
    \le x^{-\alpha} \|f\|_{L^{p,1}(d\mu_1)} \|F_x\|_{L^{p',\infty}(d\mu_1)}
\end{gather*}
where $F_x(y) = y^{-\beta}\chi_{[x,\infty)}(y)$. Note that
\begin{equation*}
    \|F_x\|_{L^{p',\infty}(d\mu_1)} = \sup_{\lambda>0}  \lambda d_{F_x}(\lambda)^{1/{p'}}
\end{equation*}
where $d_{F_x}(\lambda) = \mu_1\left(\{y\ge x: y^{-\beta}>\lambda\}\right)$.

Since
\begin{equation*}
    d_{F_x}(\lambda) = \frac{1}{d_1}(\lambda^{-d_1/\beta} - x^{d_1}) \chi_{0<\lambda<x^{-\beta}}
\end{equation*}
thus 
\begin{gather*}
    \|F_x\|_{L^{p',\infty}(d\mu_1)} = \left(\sup_{0<\lambda<x^{-\beta}} \lambda^{p'}d_1^{-1}(\lambda^{-{d_1}/\beta}-x^{d_1}) \right)^{1/{p'}}\\
    = d_1^{-1/{p'}} \left(\sup_{0<\lambda<x^{-\beta}} \lambda^{p'-d_1/\beta}- x^{d_1} \lambda^{p'}\right)^{1/{p'}} \le C_{d_1,p}
\end{gather*}
since $p' = d_1/\beta$. Note that this estimate is uniform in $x$. Thus, we get
\begin{equation}\label{eq333}
    |R_1(f)(x)|\le C_{d_1,p} \|f\|_{L^{p,1}(d\mu_1)} x^{-\alpha}
\end{equation}
Finally, since $x^{-\alpha} \in L^{q,\infty}(X,d\mu_2)$ whenever $q\ge d_2/\alpha$ the result of the first part follows.

For the second part, we simply have
\begin{equation}\label{2.14e}
    |R_1(f)(x)|\le x^{-\alpha}\int_x^\infty |f(y)|y^{-\beta}d\mu_1(y)\le x^{-(\alpha+\beta)}\|f\|_{L^1(d\mu_1)}
\end{equation}
and the result follows by $x^{-(\alpha+\beta)} \in L^{r,\infty}(X,d\mu_2)$ whenever $r\ge \frac{d_2}{\alpha+\beta}$.
\end{proof}

\begin{remark}\label{reRWE}
It is clear from \eqref{eq333}, 
\begin{equation*}
    \|R_1(f)\|_{L^q(X,d\mu_2)} \lesssim \|f\|_{L^{p,1}(X,d\mu_1)} \quad \forall q>d_2/\alpha
\end{equation*}
\end{remark}

Then by Theorem \ref{thmLSI}, we have $R_1$ is $(p,q)$ bounded in the intersection of the open quadrilateral with vertexes $(\frac{d_1-\beta}{d_1},0)$, $(1,0)$, $(1,\frac{\alpha+\beta}{d_2})$, $(\frac{d_1-\beta}{d_1}, \frac{\alpha}{d_2})$ and the set $\{(1/p,1/q)\in [0,1]\times[0,1]: 1/q\le 1/p\}$ in the plane $1/p\text{-}1/q$. Moreover, by Marcinkiewicz's interpolation theorem, the second part tells us $R_1$ is bounded from $L^1(X,d\mu_1)\to L^q(X,d\mu_2)$ for all $\frac{d_2}{\alpha+\beta}< q<\infty$. 

Next we treat the upper boundary of the quadrilateral. Let $\epsilon>0$ and set $\frac{1}{p_\epsilon} = \frac{d_1-\beta}{d_1}+\epsilon$ and $\frac{1}{q_\epsilon} = \frac{d_1}{d_2}\cdot \frac{1}{p_\epsilon} - \frac{d_1-\alpha-\beta}{d_2}$. Then the following lemma gives a weak estimate on that line segment.

\begin{lemma}
For $0<\beta \le d_1$, $R_1$ is of weak type $(p_\epsilon,q_\epsilon)$ for all $\epsilon>0$ such that $(1/p_\epsilon,1/q_\epsilon)\in (0,1)\times(0,1)$.
\end{lemma}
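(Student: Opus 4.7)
The plan is to obtain the weak-type bound via a straight pointwise estimate using Hölder's inequality on the inner integral, plus the elementary fact that a pure power function lies in the weak Lorentz class on the inhomogeneous halfline. The key observation is that, because $\epsilon > 0$, the dual exponent $p_\epsilon'$ strictly exceeds $d_1/\beta$, which is exactly the threshold needed for the relevant power function to be $\mu_1$-integrable on $[x,\infty)$.

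Concretely, for fixed $x \ge 1$, I would apply Hölder's inequality with exponents $p_\epsilon$ and $p_\epsilon'$ to the integrand in
\begin{equation*}
    |R_1(f)(x)| \le x^{-\alpha}\int_x^\infty |f(y)|\, y^{-\beta}\, d\mu_1(y),
\end{equation*}
which yields
\begin{equation*}
    |R_1(f)(x)| \le x^{-\alpha}\, \|f\|_{L^{p_\epsilon}(d\mu_1)} \left(\int_x^\infty y^{-\beta p_\epsilon'}\, d\mu_1(y)\right)^{1/p_\epsilon'}.
\end{equation*}
Since $1/p_\epsilon = (d_1-\beta)/d_1 + \epsilon$ gives $p_\epsilon' = d_1/(\beta - d_1\epsilon) > d_1/\beta$, the second factor is a convergent integral equal to a constant times $x^{d_1/p_\epsilon' - \beta}$. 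A direct algebraic check (using $1/p_\epsilon + 1/p_\epsilon' = 1$ and the definition of $q_\epsilon$) shows
\begin{equation*}
    -\alpha + \frac{d_1}{p_\epsilon'} - \beta \;=\; -\Bigl(\frac{d_1}{p_\epsilon} - (d_1-\alpha-\beta)\Bigr) \;=\; -\frac{d_2}{q_\epsilon},
\end{equation*}
so I obtain the pointwise bound
\begin{equation*}
    |R_1(f)(x)| \lesssim \|f\|_{L^{p_\epsilon}(d\mu_1)} \cdot x^{-d_2/q_\epsilon}, \qquad x \ge 1.
\end{equation*}

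To conclude, I would show that $g(x) = x^{-d_2/q_\epsilon}$ lies in $L^{q_\epsilon,\infty}(X,d\mu_2)$. For $0 < \lambda < 1$ the level set is $[1, \lambda^{-q_\epsilon/d_2})$, and its $\mu_2$-measure is bounded by $\lambda^{-q_\epsilon}/d_2$, while for $\lambda \ge 1$ the level set is empty; hence $\sup_\lambda \lambda\, \mu_2(\{g > \lambda\})^{1/q_\epsilon} \le d_2^{-1/q_\epsilon}$. The pointwise bound then upgrades to the weak-type inequality $\|R_1(f)\|_{L^{q_\epsilon,\infty}(d\mu_2)} \lesssim \|f\|_{L^{p_\epsilon}(d\mu_1)}$, which is the desired weak type $(p_\epsilon,q_\epsilon)$ estimate.

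The whole argument is routine once the exponents are arranged correctly, so there is no real obstacle; the only place that requires attention is the arithmetic check that $-\alpha + d_1/p_\epsilon' - \beta = -d_2/q_\epsilon$, and the verification that the constraint $(1/p_\epsilon, 1/q_\epsilon) \in (0,1)\times(0,1)$ is consistent with the requirement $\beta - d_1\epsilon > 0$ needed for $p_\epsilon' > d_1/\beta$. Note that Hölder gives a strong-to-weak statement here, which is strictly stronger than what interpolation alone would deliver on this boundary segment; this is exactly why the lemma fills the gap missing from the application of Theorem~\ref{thmLSI}.
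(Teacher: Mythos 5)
Your proposal is correct and follows essentially the same route as the paper: apply H\"older's inequality with exponents $p_\epsilon, p_\epsilon'$ to obtain the pointwise bound $|R_1(f)(x)| \lesssim \|f\|_{L^{p_\epsilon}(d\mu_1)}\, x^{-d_2/q_\epsilon}$ (the paper writes the exponent as $d_1/p_\epsilon' - \alpha - \beta$, which is algebraically the same), and then read off the weak-type bound from the fact that this power function belongs to $L^{q_\epsilon,\infty}(X,d\mu_2)$ -- which is precisely what the paper does by solving the level-set inequality for $x$ and measuring the resulting interval with $\mu_2$.
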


\begin{proof}
 Note that $\forall x\ge 1$, we have
 \begin{gather*}
     |R_1(f)(x)|\le x^{-\alpha} \int_x^\infty |f(y)|y^{-\beta}y^{d_1-1}dy\\
     \le x^{-\alpha}\|f\|_{p_\epsilon} \left(\int_x^\infty y^{d_1-\beta p_\epsilon' -1}\right)^{1/p_\epsilon' }\\
     \lesssim x^{d_1/{p_\epsilon'}-\alpha-\beta}\|f\|_{L^{p_\epsilon}(X,d\mu_1)}
 \end{gather*}
 
 Now suppose that for some $\lambda>0$ we have
 \begin{equation*}
     |R_1(f)(x)|>\lambda
 \end{equation*}
 then we have
 \begin{equation*}
     \lambda \lesssim x^{d_1/{p_\epsilon'}-\alpha-\beta}\|f\|_{p_\epsilon}
 \end{equation*}
 which implies
 \begin{equation*}
     x^{q_\epsilon(\alpha+\beta-d_1/{p_\epsilon'})} \lesssim \frac{\|f\|_{p_\epsilon}^{q_\epsilon}}{\lambda^{q_\epsilon}} \implies x\lesssim \left(\frac{\|f\|_{p_\epsilon}^{q_\epsilon}}{\lambda^{q_\epsilon}}\right)^{\frac{1}{q_\epsilon(\alpha+\beta-d_1/{p_\epsilon'})}}
 \end{equation*}
 thus 
 \begin{equation*}
     \mu_2\{x\ge 1:|R_1(f)(x)|>\lambda\} \lesssim \left(\frac{\|f\|_{p_\epsilon}^{q_\epsilon}}{\lambda^{q_\epsilon}}\right)^{\frac{d_2}{q_\epsilon(\alpha+\beta-d_1/{p_\epsilon'})}} = \left(\frac{\|f\|_{p_\epsilon}}{\lambda}\right)^{q_\epsilon}
 \end{equation*}
\end{proof}
\begin{remark}
Note that for the case $\beta=d_1$, $R_1$ is $(p,q)$ bounded for $1<p<\infty$ and $1/q< d_1/(d_2p)+\alpha/d_2$. Indeed, by Hölder's inequality,
\begin{equation*}
    |R_1(f)(x)|\le x^{-\alpha}\|f\|_{L^p(d\mu_1)} \left(\int_x^\infty y^{-d_1p' +d_1-1}dy\right)^{1/p'} \approx x^{\frac{d_1(1-p')}{p'}-\alpha}\|f\|_{L^p(d\mu_1)}
\end{equation*}
and $\|R_1(f)\|_q \lesssim \|f\|_p$ follows.
\end{remark}

To verify the negative part of Theorem \ref{thmWE}, we need the following lemma.

\begin{lemma}
Let $0< \beta \le d_1$.

$(a).$ For $\frac{1}{p} \le \frac{d_1-\beta}{d_1}$, $R_1$ is not bounded from $L^p(X,d\mu_1)$ to $L^q(X,d\mu_2)$ for any $1\le q\le \infty$. 

$(b).$ For $\frac{d_1-\beta}{d_1}<\frac{1}{p} \le 1$, $R_1$ is not bounded from $L^p(X,d\mu_1)$ to $L^q(X,d\mu_2)$ for
\begin{equation*}
    \frac{1}{q}>\frac{d_1}{d_2}\cdot \frac{1}{p} - \frac{d_1-\alpha-\beta}{d_2}
\end{equation*}
\end{lemma}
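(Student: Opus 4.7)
The plan is to demonstrate unboundedness in both parts by exhibiting explicit counterexamples, namely power functions $f_\gamma(y) = y^{-\gamma}$ on $X = [1,\infty)$ for a carefully chosen exponent $\gamma$, together with a logarithmic refinement at one endpoint. The underlying computation is elementary:
\begin{equation*}
\|f_\gamma\|_{L^p(d\mu_1)}^p = \int_1^\infty y^{d_1 - \gamma p - 1}\,dy, \qquad R_1(f_\gamma)(x) = x^{-\alpha}\int_x^\infty y^{d_1 - \beta - \gamma - 1}\,dy;
\end{equation*}
so $f_\gamma \in L^p(d\mu_1)$ precisely when $\gamma > d_1/p$, while $R_1(f_\gamma)$ is pointwise infinite whenever $\gamma \le d_1 - \beta$, and otherwise equals a constant multiple of $x^{d_1 - \alpha - \beta - \gamma}$ whose $L^q(d\mu_2)$ norm is finite if and only if $\gamma > (d_1 - \alpha - \beta) + d_2/q$.

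For part (a), the hypothesis $1/p \le (d_1 - \beta)/d_1$ means $d_1/p \le d_1 - \beta$. In the strict case $d_1/p < d_1 - \beta$, any $\gamma \in (d_1/p, d_1 - \beta)$ yields $f_\gamma \in L^p(d\mu_1)$ while $R_1(f_\gamma) \equiv \infty$, so $R_1$ fails to map into every $L^q(d\mu_2)$. At the boundary $d_1/p = d_1 - \beta$ (which forces $p = d_1/(d_1 - \beta) > 1$, since $\beta > 0$), I replace the bare power by $f(y) = y^{-(d_1-\beta)}(\log(e+y))^{-a}$ with $a$ chosen so that $1/p < a \le 1$. A direct computation then gives $\|f\|_{L^p(d\mu_1)}^p \sim \int_1^\infty y^{-1}(\log y)^{-ap}\,dy < \infty$ and $R_1(f)(x) \gtrsim x^{-\alpha}\int_x^\infty y^{-1}(\log y)^{-a}\,dy = \infty$, completing part (a). The extreme case $p = \infty$ (with $\beta < d_1$) is handled by $f \equiv 1$, which already satisfies $R_1(f) \equiv \infty$.

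For part (b), the hypothesis $(d_1-\beta)/d_1 < 1/p \le 1$ gives $d_1/p > d_1 - \beta$, so any admissible $\gamma > d_1/p$ automatically makes $R_1(f_\gamma)$ finite and equal to $c_\gamma x^{d_1 - \alpha - \beta - \gamma}$. To force $R_1(f_\gamma)\notin L^q(d\mu_2)$ I require $\gamma$ in the half-open interval $\bigl(d_1/p,\; (d_1 - \alpha - \beta) + d_2/q\bigr]$, which is non-empty exactly when $d_1/p < (d_1 - \alpha - \beta) + d_2/q$. This condition rearranges verbatim to the standing hypothesis $1/q > d_1/(d_2 p) - (d_1 - \alpha - \beta)/d_2$, so picking any $\gamma$ in that interval yields $f_\gamma \in L^p(d\mu_1)$ with $\|R_1 f_\gamma\|_{L^q(d\mu_2)} = \infty$, showing the claimed unboundedness.

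The whole argument reduces to bookkeeping of the three exponents $d_1/p$, $d_1-\beta$, and $(d_1-\alpha-\beta) + d_2/q$ against which $\gamma$ is sandwiched. The only genuinely delicate point is the boundary case of part (a), where the natural power test narrowly fails; the logarithmic perturbation resolves it, and the standing assumption $\beta > 0$ leaves enough room to choose the logarithmic exponent $a$. No tool beyond direct pointwise evaluation of $R_1$ on power and logarithmically perturbed functions is needed.
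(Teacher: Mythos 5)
Your argument is correct and rests on the same idea as the paper's: unboundedness is exhibited by explicit power-law test functions $f_\gamma(y)=y^{-\gamma}$, with a logarithmic refinement at the borderline. For part (b) your parametrization by $\gamma\in\bigl(d_1/p,\;(d_1-\alpha-\beta)+d_2/q\bigr]$ is precisely the paper's $f(y)=y^{-(d_1+\delta)/p}$ with $\delta=\gamma p-d_1$, so the two are identical up to a change of variables; note also that the hypotheses of part (b) force $d_2/q>\alpha>0$, so $q<\infty$ and the right endpoint of your interval is safe. The one genuine difference is in part (a): where you run a three-way case split (strict interior via a bare power, the boundary $p=d_1/(d_1-\beta)$ via a log factor with exponent $a\in(1/p,1]$, and $p=\infty$ separately via a constant), the paper uses the single function $f(y)=y^{\beta-d_1}(1+\log y)^{-1}$, whose $L^p$ membership reduces after the substitution $y=e^x$ to $\int_0^\infty e^{-cx}(1+x)^{-p}\,dx$ with $c=p(d_1-\beta)-d_1\ge 0$ and whose image under $R_1$ is everywhere infinite; this covers all of (a), including $p=\infty$ (take $\beta=d_1$ so $f=(1+\log y)^{-1}$), in one stroke. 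Your version is a little more laborious but equally valid; the paper's is the same counterexample specialized to $a=1$, applied uniformly.
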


\begin{proof}
(a). Let $p\ge \min( \frac{d_1}{d_1-\beta},\infty)$. We consider function $f(y) = y^{\beta-d_1}(1+\log y)^{-1}$. We can see that if $\beta<d_1$,
\begin{gather*}
    \|f\|_p^p = \int_1^\infty y^{-p(d_1-\beta)}(1+ \log y)^{-p}y^{d_1-1}dy\\
    = \int_0^\infty e^{-cx}(1+x)^{-p}dx <\infty
\end{gather*}
where $c = p(d_1-\beta)-d_1\ge 0$.

And if $\beta = d_1$,
\begin{equation*}
    \|f\|_\infty \le 1
\end{equation*}
However $\forall x\in X$,
\begin{equation*}
    R_1(f)(x) = x^{-\alpha}\int_x^\infty y^{-1}(1+\log y)^{-1}dy = x^{-\alpha}\int_{\log x}^\infty (1+t)^{-1}dt = \infty
\end{equation*}
which gives a counter example.

(b). Let $1\le p<\min( \frac{d_1}{d_1-\beta}, \infty)$. We consider function $f(y) = y^{-\frac{d_1+\delta}{p}}$ for some $\delta>0$ to be determined later. Then
\begin{equation*}
    \|f\|_p^p = \int_1^\infty y^{-d_1-\delta+d_1-1}dy \lesssim_{\delta} 1
\end{equation*}
and
\begin{equation*}
    R_1(f)(x) = x^{-\alpha}\int_x^\infty y^{-\beta-\frac{d_1+\delta}{p}+d_1-1}dy \approx_{\beta,d_1,\delta,p} x^{d_1-\alpha-\beta-\frac{d_1+\delta}{p}}
\end{equation*}
since $p<\frac{d_1}{d_1-\beta}<\frac{d_1+\delta}{d_1-\beta}$.

Hence
\begin{equation}\label{2.15e}
    \|R_1(f)\|_q^q \approx \int_1^\infty x^{q(d_1-\alpha-\beta)-\frac{q}{p}(d_1+\delta)+d_2-1}dx
\end{equation}
Note that
\begin{equation*}
     \frac{1}{q}>\frac{d_1}{d_2}\cdot \frac{1}{p} - \frac{d_1-\alpha-\beta}{d_2}\implies 0<\frac{p}{q}\cdot d_2 - d_1 + p(d_1-\alpha-\beta)
\end{equation*}
Therefore, by choosing
\begin{equation*}
    0<\delta \le \frac{p}{q}\cdot d_2 - d_1 + p(d_1-\alpha-\beta)
\end{equation*}
\eqref{2.15e} is unbounded.

\end{proof}

Finally, we introduce the following lemma to complete the picture of the $(p,q)$ boundedness of $R_1$.

\begin{lemma}
Let $0<\beta \le d_1$.

(a) If $\alpha+\beta \le d_2$. Then $R_1$ is bounded from $L^1(X,d\mu_1)$ to $L^q(X,d\mu_2)$ where $q = \frac{d_2}{\alpha+\beta}$.

(b) For $\max (\frac{d_1-\beta}{d_1},0) < \frac{1}{p} \le 1$, $R_1$ is bounded from $L^p(X,d\mu_1)$ to $L^{\infty}(X,d\mu_2)$.

\end{lemma}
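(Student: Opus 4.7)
The plan is to prove part (a) via Minkowski's integral inequality and part (b) via Hölder's inequality. The only interesting point is the endpoint in (a): the crude pointwise estimate $|R_1(f)(x)| \le x^{-(\alpha+\beta)} \|f\|_1$ that was used in \eqref{2.14e} only places $R_1(f)$ in the weak space $L^{d_2/(\alpha+\beta), \infty}(d\mu_2)$, so getting all the way to the strong bound at $q = d_2/(\alpha+\beta)$ requires something more. I expect this to be the main obstacle, and Minkowski is the natural tool to overcome it, since then the exponents conspire to cancel exactly at the endpoint.

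For (a), I would write $R_1(f)(x) = \int_1^\infty K(x,y) f(y) d\mu_1(y)$ with $K(x,y) = x^{-\alpha} y^{-\beta} \chi_{\{y \ge x\}}$ and apply Minkowski's integral inequality to reduce matters to computing $\|K(\cdot, y)\|_{L^q(d\mu_2)}$ uniformly in $y \ge 1$. A direct computation yields
\[
\|K(\cdot, y)\|_{L^q(d\mu_2)} = y^{-\beta} \left( \int_1^y x^{d_2 - \alpha q - 1} dx \right)^{1/q}.
\]
The assumption $\beta > 0$ together with the choice $q = d_2/(\alpha+\beta)$ forces $d_2 - \alpha q = d_2 \beta/(\alpha+\beta) > 0$, so the inner integral is of order $y^{d_2 - \alpha q}$, and the overall exponent becomes $-\beta + d_2/q - \alpha = 0$ exactly at the endpoint. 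Thus $\|K(\cdot, y)\|_{L^q(d\mu_2)} \lesssim 1$ uniformly in $y$, and Minkowski concludes $\|R_1(f)\|_{L^q(d\mu_2)} \lesssim \|f\|_{L^1(d\mu_1)}$.

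For (b), I would apply Hölder with exponents $p$ and $p'$ to the defining integral of $R_1$, obtaining
\[
|R_1(f)(x)| \le x^{-\alpha} \|f\|_{L^p(d\mu_1)} \left( \int_x^\infty y^{-\beta p' + d_1 - 1} dy \right)^{1/p'}.
\]
The hypothesis $1/p > (d_1 - \beta)/d_1$ is exactly $\beta p' > d_1$, which both guarantees convergence of the inner integral and evaluates it as a constant multiple of $x^{d_1 - \beta p'}$. Combining, $|R_1(f)(x)| \lesssim x^{-\alpha + d_1/p' - \beta} \|f\|_{L^p(d\mu_1)}$; since $\alpha > 0$ and $\beta > d_1/p'$, the exponent is strictly negative, so the estimate is uniform on $x \ge 1$ and gives $\|R_1(f)\|_{L^\infty(d\mu_2)} \lesssim \|f\|_{L^p(d\mu_1)}$. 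The endpoint $p = 1$ is handled identically to \eqref{2.14e}, replacing Hölder by the bound $\sup_{y \ge x} y^{-\beta} = x^{-\beta}$.
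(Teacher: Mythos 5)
Your proof is correct and follows essentially the same approach as the paper: Minkowski's integral inequality for part (a) and H\"older's inequality for part (b). The only (cosmetic) difference is that in (a) the paper first performs a dyadic decomposition in $y$ before applying Minkowski, whereas you apply Minkowski directly and compute $\|K(\cdot,y)\|_{L^q(d\mu_2)}$; your version is slightly cleaner, since the dyadic step only costs an extra factor of $2^\beta$ and is not needed.
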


\begin{proof}
(a) Note that
\begin{gather*}
    R_1(f)(x) = x^{-\alpha}\int_x^\infty y^{-\beta}f(y)d\mu_1(y)\\
    \le \sum_{j\ge 0} 2^{-j\beta} \int_{2^j}^{2^{j+1}} \big(x^{-\alpha}\chi_{(1,y)}(x)\big) |f(y)|d\mu_1(y)
\end{gather*}
Then by Minkowski's integral inequality, we have
\begin{equation*}
    \|R_1(f)\|_{L^{q}(X,d\mu_2)}\le \sum_{j\ge 0} 2^{-j\beta} \int_{2^j}^{2^{j+1}} \|x^{-\alpha}\chi_{(1,y)}(x)\|_{L^{q}(X,d\mu_2)} |f(y)|d\mu_1(y)
\end{equation*}
Notice that
\begin{equation*}
    \|x^{-\alpha}\chi_{(0,y)}(x)\|_{L^{q}(X,d\mu_2)}^{q} = \int_1^y x^{-\frac{\alpha d_2}{\alpha+\beta}+d_2-1}dx \lesssim_{d_2,\alpha,\beta} y^{\frac{\beta d_2}{\alpha+\beta}}
\end{equation*}
Therefore 
\begin{gather*}
    \|R_1(f)\|_{L^q(X,d\mu_2)} \lesssim \sum_{j\ge 0} 2^{-j\beta} \int_{2^j}^{2^{j+1}}y^\beta |f(y)|d\mu_1(y)\\
    \le 2^\beta \sum_{j\ge 0} \int_{2^j}^{2^{j+1}}|f(y)|d\mu_1(y) = 2^\beta \|f\|_{L^1(X,d\mu_1)}
\end{gather*}

(b) The case $p=1$ is obvious and for any $1< p< \min ( \frac{d_1}{d_1-\beta}, \infty)$, by \eqref{2.14e} we have $\forall x\ge 1$,
\begin{gather*}
    |R_1(f)(x)|\le x^{-\alpha} \int_x^\infty |f(y)|y^{-\beta}d\mu_1(y)\\
    \le \|f\|_p \left(\int_x^{\infty} y^{d_1-\beta p'-1}\right)^{1/p'}\lesssim \|f\|_p 
\end{gather*}

\end{proof}

Introduce notions:
\begin{gather*}
    D_{d_1,d_2}^{\alpha,\beta} = \left\{\left(\frac{1}{p},\frac{1}{q}\right)\in [0,1]\times [0,1]: \frac{1}{q} \le \frac{d_1}{d_2}\cdot \frac{1}{p} - \frac{d_1-\alpha-\beta}{d_2}, \frac{d_1-\beta}{d_1}<\frac{1}{p}\le1\right\}\\
    F_{d_1,d_2}^{\alpha,\beta} = \left\{\left(\frac{1}{p},\frac{1}{q}\right)\in [0,1]\times [0,1]: \frac{1}{q} \le \frac{d_1}{d_2}\cdot \frac{1}{p} - \frac{d_1-\alpha-\beta}{d_2}, \frac{d_1-\alpha- \beta}{d_1} \le \frac{1}{p}< \frac{d_1-\beta}{d_1}\right\} \bigcup\\
    \left\{\left(\frac{1}{p},\frac{1}{q}\right)\in [0,1]\times [0,1]: \frac{1}{q} < \frac{\alpha}{d_2}, \frac{d_1-\beta}{d_1} \le \frac{1}{p}\le 1\right\}
\end{gather*}
Now putting things together, we have the following. 
\begin{theorem}\label{thmPQ}
 For $1\le p,q\le \infty$, $0<\beta\le d_1$ and $\alpha>0$, $R_1$ is $(p,q)$ bounded if and only if $(1/p,1/q)\in D_{d_1,d_2}^{\alpha,\beta}$.
\end{theorem}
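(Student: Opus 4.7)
The plan is to prove Theorem~\ref{thmPQ} by piecing together the boundary estimates established in the preceding lemmas and filling the interior of $D=D_{d_1,d_2}^{\alpha,\beta}$ by the Marcinkiewicz-type interpolation Theorem~\ref{thmLSI}. Geometrically, $D$ is a convex region bounded by the slanted top edge from $A=((d_1-\beta)/d_1,\alpha/d_2)$ to $B=(1,(\alpha+\beta)/d_2)$, the right edge from $B$ down to $C=(1,0)$, the bottom edge from $C$ to $((d_1-\beta)/d_1,0)$, and the open left edge returning to $A$. The top edge is included in $D$ (up to and including $B$) but the left edge (in particular the vertex $A$) is excluded. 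The ``only if'' direction is already in hand from the negative lemma preceding the theorem, which precludes $(p,q)$-boundedness whenever either defining inequality of $D$ is violated.

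For the ``if'' direction, I would collect the following boundary estimates. On the interior of the top segment $AB$, the lemma immediately preceding the theorem gives weak-type $(p_\epsilon,q_\epsilon)$ bounds. At $B$, part (a) of the final lemma supplies the strong-type bound $L^1\to L^{d_2/(\alpha+\beta)}$ when $\alpha+\beta\le d_2$; in the complementary case the upper-right corner is truncated by the constraint $1/q\le 1$ and the same procedure applies. At $C$, the trivial estimate $|R_1(f)(x)|\le x^{-\alpha-\beta}\|f\|_1\le \|f\|_1$ for $x\ge 1$ gives $L^1\to L^{\infty}$, and along the bottom edge part (b) of the final lemma gives $L^p\to L^{\infty}$. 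For an arbitrary target point $(1/p,1/q)\in D$, including points strictly between $A$ and $B$ on the top edge, I would pick two boundary estimates whose interpolating line passes through the target, apply Theorem~\ref{thmLSI} with parameter $r=p$, and upgrade the resulting $L^{p,p}\to L^{q,p}$ bound to a strong $L^p\to L^q$ bound via the nested property of Lorentz spaces (valid because $q\ge p$ throughout $D$), exactly as noted in the paragraph following Theorem~\ref{thmLSI}.

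The main difficulty is essentially bookkeeping: one must verify that for every target point in $D$, a pair of boundary estimates exists whose interpolation segment lies inside $D$ and passes through the target. The excluded left edge forces one to stay away from $A$, but since every point of $D$ has $1/p$ strictly greater than $(d_1-\beta)/d_1$, there is always room on the right to find an admissible second boundary point. Once such a pair is chosen, Theorem~\ref{thmLSI} together with the nested property closes the argument, so the bulk of the work is the combinatorial verification that interpolation segments cover all of $D$ while avoiding its excluded boundary.
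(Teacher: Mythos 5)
Your plan mirrors the paper's implicit argument (the text preceding the theorem says only ``putting things together''), but the assertion that $q\ge p$ throughout $D$ is false, and that is exactly where the proposal breaks. Take $d_1=d_2=3$, $\alpha=\beta=2$, which satisfies the hypotheses and is of the type occurring in the paper's application (there $\alpha+\beta=n_i+n_j-2>\max(d_1,d_2)$). Then the slanted top edge of $D$ is $1/q=1/p+1/3$, which lies strictly above the diagonal $1/q=1/p$, so $q<p$ along the entire edge. For such points Theorem~\ref{thmLSI} gives $\|Tf\|_{(q,r)}\lesssim\|f\|_{(p,r)}$; taking $r=q$ yields $\|Tf\|_{L^q}\lesssim\|f\|_{(p,q)}$, and the nested property gives $\|f\|_{L^p}\lesssim\|f\|_{(p,q)}$ when $q<p$, which is the wrong direction for concluding $\|Tf\|_{L^q}\lesssim\|f\|_{L^p}$. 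The paper itself records that the interpolation upgrade works ``only when $q\ge p$,'' so the preceding lemmas simply do not reach the portion of $D$ above the diagonal, and your bookkeeping cannot reach it either.

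Worse, on the $q<p$ part of the slanted boundary the strong bound is actually false, so the theorem statement overshoots. With the parameters above, $R_1f(x)=x^{-2}\int_x^\infty f(y)\,dy$ on $X=[1,\infty)$, and $(1/p,1/q)=(1/2,5/6)$ lies on the top edge of $D$. Take $f(y)=y^{-3/2}\chi_{[1,T]}(y)$: then $\|f\|_{L^2(d\mu_1)}^2=\int_1^T y^{-1}\,dy=\log T$, while $R_1f(x)\gtrsim x^{-5/2}$ for $1\le x\le T/4$, so $\|R_1f\|_{L^{6/5}(d\mu_2)}^{6/5}\gtrsim\int_1^{T/4}x^{-1}\,dx=\log(T/4)$; the quotient $\|R_1f\|_{L^{6/5}}/\|f\|_{L^2}\gtrsim(\log T)^{1/3}$ is unbounded. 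Only the weak-type estimate from the lemma on $(p_\epsilon,q_\epsilon)$ survives there. A correct formulation must therefore either make the slanted constraint strict or intersect its boundary with $\{q\ge p\}$. For strict interior points of $D$ with $q<p$ the strong bound does hold, but you should derive it from the pointwise decay $|R_1f(x)|\lesssim x^{d_1/p'-\alpha-\beta}\|f\|_{L^p}$ established in the proof of that weak-type lemma (integrating a strictly integrable power of $x$), not from the Marcinkiewicz-plus-nested route.
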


\begin{corollary}\label{corPQ}
For $1\le p,q\le \infty$, $0<\alpha'\le d_2$ and $\beta' >0$, $R_2$ is $(p,q)$ bounded if and only if $(1/p,1/q)\in F_{d_1,d_2}^{\alpha',\beta'}$.
\end{corollary}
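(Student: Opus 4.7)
The plan is to derive Corollary \ref{corPQ} from Theorem \ref{thmPQ} by duality. As recorded in the discussion preceding Theorem \ref{thmWE}, a direct Fubini computation shows that the formal adjoint of $R_1$ is
\begin{equation*}
    R_1^*(g)(x)=x^{-\beta}\int_1^x y^{-\alpha}\,g(y)\,d\mu_2(y),
\end{equation*}
and that under the substitution $\alpha\mapsto\beta'$, $\beta\mapsto\alpha'$, $d_1\leftrightarrow d_2$ this is exactly $R_2$. Under the same substitution the hypotheses ``$0<\beta\le d_1$, $\alpha>0$'' of Theorem \ref{thmPQ} translate into precisely the hypotheses ``$0<\alpha'\le d_2$, $\beta'>0$'' of the present corollary. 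Since $R_1,R_2$ are positive integral operators on the $\sigma$-finite measure spaces $([1,\infty),d\mu_j)$, standard $L^p$--$L^{p'}$ duality gives
\begin{equation*}
    \|R_2\|_{L^p(d\mu_1)\to L^q(d\mu_2)}=\|\widetilde{R_1}\|_{L^{q'}(d\mu_2)\to L^{p'}(d\mu_1)},
\end{equation*}
where $\widetilde{R_1}$ denotes $R_1$ with substituted parameters $(\beta',\alpha')$ in the dimension pair $(d_2,d_1)$.

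The next step is to invoke Theorem \ref{thmPQ} for $\widetilde{R_1}$: it gives boundedness if and only if $(1/q',1/p')\in D_{d_2,d_1}^{\beta',\alpha'}$, that is,
\begin{equation*}
    \tfrac{1}{p'}\le\tfrac{d_2}{d_1}\cdot\tfrac{1}{q'}-\tfrac{d_2-\alpha'-\beta'}{d_1}\qquad\text{and}\qquad \tfrac{d_2-\alpha'}{d_2}<\tfrac{1}{q'}\le 1.
\end{equation*}
Substituting $1/p'=1-1/p$ and $1/q'=1-1/q$ and rearranging, the first inequality becomes $1/q\le(d_1/d_2)(1/p)-(d_1-\alpha'-\beta')/d_2$ and the second becomes $0\le 1/q<\alpha'/d_2$. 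A short case split on $1/p$ identifies this region with $F_{d_1,d_2}^{\alpha',\beta'}$: for $(d_1-\alpha'-\beta')/d_1\le 1/p<(d_1-\beta')/d_1$ the slant-line bound is binding and automatically forces the horizontal bound, producing the first piece of $F$; for $(d_1-\beta')/d_1\le 1/p\le 1$ the slant line already lies above $\alpha'/d_2$, so only $1/q<\alpha'/d_2$ remains active, producing the second piece.

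The main thing that will require care, though not conceptually deep, is the bookkeeping around strict versus non-strict boundary inequalities: the strict inequality $(d_1-\beta)/d_1<1/p$ in the definition of $D$ must translate exactly into the strict inequality $1/q<\alpha'/d_2$ in the second piece of $F$, and one must check that this correspondence survives the change of variables $(p,q)\leftrightarrow(q',p')$ at the endpoints $p\in\{1,\infty\}$ or $q\in\{1,\infty\}$. Should the $L^p$--$L^{p'}$ duality at these endpoints require additional justification, the argument can be supplemented by directly mimicking, for $R_2$, the three lemmas preceding Theorem \ref{thmPQ}: Minkowski's integral inequality on a dyadic decomposition of the $y$-variable in $(1,x)$ for the corner $(1/p,1/q)=(1,(\alpha'+\beta')/d_2)$, a Hölder bound for $L^p\to L^\infty$, and explicit test functions of the form $y^{-(d_1+\delta)/p}$ and $y^{-c}(1+\log y)^{-1}$ for the unboundedness claims outside $F$.
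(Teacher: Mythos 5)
Your proposal is correct and takes essentially the same approach as the paper, whose entire proof reads ``By duality of Theorem \ref{thmPQ}.'' You have simply supplied the parameter bookkeeping (the substitutions $\alpha\mapsto\beta'$, $\beta\mapsto\alpha'$, $d_1\leftrightarrow d_2$, the passage from $(1/q',1/p')\in D_{d_2,d_1}^{\beta',\alpha'}$ to $(1/p,1/q)$, and the case split showing the resulting region equals $F_{d_1,d_2}^{\alpha',\beta'}$) that the paper leaves implicit.
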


\begin{proof}
    By duality of Theorem \ref{thmPQ}.
\end{proof}

\subsection{The strong $(p,q)$ estimates for $G_3$ and $G_4$}
As discussed in section 1, we can decompose the low energy part of Riesz transform as
\begin{gather*}
    R_L = \frac{2}{\pi}\nabla \int_0^{1}(\Delta+k^2)^{-1}dk = \frac{2}{\pi}\nabla \int_0^{1}\left(G_1(k) + G_2(k) + G_3(k) + G_4(k) \right) dk
\end{gather*}
It is convenient to define
\begin{equation*}
    T_j = \frac{2}{\pi} \nabla \int_0^1 G_j(k)dk\quad j=1,2,3,4
\end{equation*}
In this subsection, we focus on $T_3$ and $T_4$ and obtain the $p\text{-}q$ estimates for them. For each $1\le i,j\le l$, let $z_i^0 \in K_i$, $z_j^0\in K_j$ be fixed. Set $d(z_i^0, K_i^c) = d(z_j^0,K_j^c)=1$, $r = d(z_i^0,z)$, $r' = d(z_j^0,z')$, $E_i = \mathbb{R}^{n_i}\times \mathcal{M}_i \setminus K_i$ for simplicity and use estimates \eqref{2.5e}, \eqref{2.8e} to get the bound of the kernel of $T_3$ i.e. $\int_0^1\nabla G_3(k)dk$ as follows
\begin{equation}
    \begin{cases}
    1, & z\in K, z'\in K\\
    (r')^{1-n_j}, & z\in K, z'\in E_j\\
    r^{-n_i}, & z\in E_i, z'\in K\\
    \min \left(r^{-n_i} (r')^{2-n_j} ; r^{1-n_i}(r')^{1-n_j}\right), & z\in E_i, z'\in E_j
    \end{cases}
    \nonumber
\end{equation}
and therefore, for each $1\le i\le l$,
\begin{equation}\label{eestimates}
T_3(f)(z)\lesssim
    \begin{cases}
    \sum_{j=1}^l \int_{E_j} (r')^{1-n_j} |f(z')|d\mu(z') + \int_K |f(z')|d\mu(z'), & z\in K\\
    \sum_{j=1}^l\int_{E_j}  \min \left(r^{-n_i} (r')^{2-n_j} ; r^{1-n_i}(r')^{1-n_j}\right) |f(z')|d\mu(z')+\\
    r^{-n_i} \int_K |f(z')|d\mu(z'), & z\in E_i 
    \end{cases}
\end{equation}
Similarly, the bound of the kernel of $T_4$ is like
\begin{equation}
    \begin{cases}
    1, & z\in K, z'\in K\\
    (r')^{-n_j}, & z\in K, z'\in E_j\\
    r^{-n_i}, & z\in E_i, z'\in K\\
     \min \left(r^{-n_i}(r')^{1-n_j} ; r^{1-n_i} (r')^{-n_j}\right), & z\in E_i, z'\in E_j
    \end{cases}
    \nonumber
\end{equation}
and therefore 
\begin{equation}\label{eqT4}
T_4(f)(z)\lesssim
    \begin{cases}
    \sum_{j=1}^l \int_{E_j} (r')^{-n_j} |f(z')|d\mu(z') + \int_K |f(z')|d\mu(z'), & z\in K\\
    \sum_{j=1}^l\int_{E_j}  \min \left(r^{-n_i} (r')^{1-n_j} ; r^{1-n_i}(r')^{-n_j}\right) |f(z')|d\mu(z')+\\
    r^{-n_i} \int_K |f(z')|d\mu(z'), & z\in E_i 
    \end{cases}
\end{equation}
We give the results in the following theorem.

\begin{theorem}\label{thmT3PQ}
Let $1\le p,q\le \infty$. $T_3$ is strong $(p,q)$ if $(1/p,1/q)\in \bigcap_{1\le i,j\le l} D_{n_j,n_i}^{n_i-1,n_j-1} \setminus \{q=1\}$. And $T_4$ is strong $(p,q)$ if $(1/p,1/q)\in \bigcap_{1\le i,j\le l} F_{n_j,n_i}^{n_i,n_j-2} \setminus \{p=\infty\}$

\end{theorem}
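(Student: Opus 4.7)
The plan is to reduce the analysis of $T_3$ and $T_4$ to the one-dimensional Hardy-Hilbert type operators studied in Section 3.3, working from the pointwise kernel estimates \eqref{eestimates} and \eqref{eqT4}. I split the action of each operator into four pieces according to whether $(z,z')$ lies in $K\times K$, $K\times E_j$, $E_i\times K$, or $E_i\times E_j$. The first three pieces have one or both variables confined to the compact set $K$ while the kernel decays as a fixed power in any remaining free variable; they are handled by Hölder's inequality combined with the explicit integrability of the power-law factor on each end, and deliver boundedness on a strictly larger $(p,q)$-range than what is claimed, so they do not constrain the final region.

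For the main piece on $E_i\times E_j$, the kernel bound depends only on the radii $r = d(z_i^0,z)$ and $r' = d(z_j^0,z')$. Writing $z = (x,y)\in\mathbb{R}^{n_i}\times\mathcal{M}_i$ and $z' = (x',y')$ analogously, I first integrate out the compact factor $\mathcal{M}_j$ on the source side and then use the radial/spherical reduction as in the proof of Theorem \ref{thmHLPG} to reduce the question to a one-dimensional operator acting from $L^p([1,\infty),(r')^{n_j-1}dr')$ into $L^q([1,\infty),r^{n_i-1}dr)$. The resulting kernel is the minimum of two monomials; split it at $r=r'$ so that the region $r\le r'$ gives an $R_1$-type contribution and the region $r>r'$ an $R_2$-type contribution, with exponents $(\alpha,\beta)$ and $(\alpha',\beta')$ read directly off from \eqref{eestimates} or \eqref{eqT4}.

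Now apply Theorem \ref{thmPQ} to the $R_1$-part and Corollary \ref{corPQ} to the $R_2$-part to obtain boundedness on $D_{n_j,n_i}^{\alpha,\beta}$ and $F_{n_j,n_i}^{\alpha',\beta'}$ respectively; the $E_i\times E_j$-piece is therefore bounded on the intersection $D\cap F$ for each pair $(i,j)$. At the indices dictated by the kernel of $T_3$, an explicit comparison of the defining inequalities shows that $D\cap F$ collapses to $D_{n_j,n_i}^{n_i-1,n_j-1}\setminus\{q=1\}$ -- the two regions share the same sloped upper boundary, but the horizontal segment $1/q = 1$ lies in $D$ while Corollary \ref{corPQ} forces the strict inequality $1/q<1$ in the flat portion of $F$. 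The analogous computation at the indices dictated by $T_4$ shows that the $F$-region is the wider one in the $1/p$-direction, the $D$-region cutting off $1/p = 0$, so the intersection collapses to $F_{n_j,n_i}^{n_i,n_j-2}\setminus\{p=\infty\}$. Taking the intersection over all $1\le i,j\le l$ yields the stated result.

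The main technical point is to execute the radial reduction cleanly -- making sure that the integrations over $\mathcal{M}_j$ and the $(n_j-1)$-sphere produce exactly the target measure $(r')^{n_j-1}dr'$ with constants uniform in the angular variable -- and then to carry out the exponent bookkeeping needed to identify the intersection $D\cap F$ with the single region stated in the theorem, including the correct interpretation of the corner exclusions $\{q=1\}$ and $\{p=\infty\}$. Once this is in place, all of the substantive analytic work has already been carried out in Theorem \ref{thmPQ} and Corollary \ref{corPQ}.
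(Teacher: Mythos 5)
Your proposal follows the paper's proof essentially step for step: decompose by $K\times K$, $K\times E_j$, $E_i\times K$, $E_i\times E_j$; dispatch the pieces with a compactly-supported variable via Hölder and power-law integrability; reduce the $E_i\times E_j$ piece to a radial operator in $r,r'$ and split it at $r=r'$ into $R_1$- and $R_2$-type contributions handled by Theorem~\ref{thmPQ} and Corollary~\ref{corPQ}; and finally identify $D\cap F$ with the stated region. The only slight imprecision is the claim that the three "compact" pieces contribute a strictly larger range and thus impose no constraint --- the $E_i\times K$ piece does require $q>1$ --- but since that constraint coincides with the $\{q=1\}$ exclusion already produced by $D\cap F$, the conclusion is unaffected.
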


\begin{proof}
First by the above estimates \eqref{eestimates}, we have
\begin{gather}\label{e3.0}
    \|T_3(f)\|_{L^q(\mathcal{M})}^q \lesssim \int_K \left(\sum_{j=1}^l\int_{E_j} (r')^{1-n_j} |f(z')|d\mu(z') + \int_K |f(z')|d\mu(z') \right)^q d\mu(z) +\\ \label{e3.1}
    \sum_{i=1}^l\int_{E_i} \left( \sum_{j=1}^l\int_{E_j}  \min \left(r^{-n_i} (r')^{2-n_j} ; r^{1-n_i}(r')^{1-n_j}\right) |f(z')|d\mu(z')+  r^{-n_i} \int_K |f(z')|d\mu(z') \right)^qd\mu(z)
\end{gather}
Note that  
\begin{equation*}
    \int_{E_j}(r')^{p'(1-n_j)}d\mu(z') \approx \int_1^\infty (r')^{p'(1-n_j)+n_j-1}dr' <\infty
\end{equation*}
if $p'>\frac{n_j}{n_j-1}$ i.e. $1\le p<n_j$. Thus for all $1\le p<n^*$ the RHS of \eqref{e3.0} can be bounded by some constant multiple of
\begin{gather*}
    \sum_{j=1}^l \int_K \left( \|f\|_{L^p(\mathcal{M})}\right)^qd\mu(z)+ \int_K \left(\int_K |f(z')|d\mu(z')\right)^qd\mu(z)\\
    \lesssim \|f\|_{L^p(\mathcal{M})}^q \quad \forall q\ge 1
\end{gather*}
since $K$ is compact. 

Next we analyse \eqref{e3.1}. Note that
\begin{gather*}
    \eqref{e3.1} \lesssim \sum_{i,j=1}^l \int_{E_i} \left(\int_{E_j} \min \left(r^{-n_i} (r')^{2-n_j} ; r^{1-n_i}(r')^{1-n_j}\right) |f(z')|d\mu(z')\right)^q d\mu(z) +\\
    \sum_{i=1}^l \int_{E_i} r^{-qn_i} \left(\int_K |f(z')|d\mu(z')\right)^q d\mu(z)
\end{gather*}
and the second term can be simply bounded by 
\begin{equation*}
    \|f\|_{L^p(\mathcal{M})}^q \sum_{i=1}^l \int_1^\infty r^{n_i-qn_i-1}dr<\infty
\end{equation*}
as long as $q>1$ and for all $1\le p\le \infty$. 

Finally, for the first line, for each $1\le i,j\le l$, we bound it as
\begin{gather*}
    \int_{E_i} \left(\int_{E_j\cap \{r\le r'\}}r^{1-n_i}(r')^{1-n_j}|f(z')|d\mu(z')\right)^q d\mu(z)\\
    + \int_{E_i}\left(\int_{E_j\cap \{1\le r'\le r\}}r^{-n_i}(r')^{2-n_j}|f(z')|d\mu(z')\right)^q d\mu(z)\\
    := \int_{E_i} |S_1(f)(z)|^q d\mu(z) + \int_{E_i} |S_2(f)(z)|^q d\mu(z)
\end{gather*}
Note that $S_1(f)$ is radial and hence transfer to polar coordinate,
\begin{gather*}
    \|S_1(f)\|_{L^q(E_i)}^q \approx \int_1^\infty |S_1(f)(r)|^q r^{n_i-1}dr = \|S_1(f)\|_{L^q([1,\infty),r^{n_i-1}dr)}^q
\end{gather*}
and
\begin{gather*}
    S_1(f)(z) \approx \int_{S_j} \int_{r}^\infty r^{1-n_i} (r')^{1-n_j} |f(r',\theta')|(r')^{n_j-1}dr'd\theta' := \int_{S_j} S_1^{\theta'}(f)(r)d\theta'
\end{gather*}
where $S_j$ denotes the sphere of the unit geodesic ball of $E_j$.

Therefore by Minkowski's integral inequality and Theorem \ref{thmPQ} with $\alpha = n_i-1$, $\alpha'=n_i$, $\beta=n_j-1$, $\beta'=n_j-2$, $d_1=n_j$, $d_2=n_i$. We have
\begin{gather*}
     \|S_1(f)\|_{L^q(E_i)} \lesssim \int_{S_j} \|S_1^{\theta'}(f)\|_{L^q([1,\infty),r^{n_i-1}dr)}d\theta' \lesssim \int_{S_j} \|f(.,\theta')\|_{L^p([1,\infty),(r')^{n_j-1}dr)} d\theta'\\
     \lesssim \left(\int_{S_j}\int_1^\infty |f(r',\theta')|^p (r')^{n_j-1}dr'd\theta'\right)^{1/p}\lesssim \|f\|_{L^p(\mathcal{M})}
\end{gather*}
if $(p,q)\in D_{n_j,n_i}^{n_i-1,n_j-1}$. Similarly, we can estimate $S_2(f)$ in the same way but using Corollary \ref{corPQ}. Then we have $S_2$ is bounded from $L^p(\mathcal{M})\to L^q(E_i)$ if $(1/p,1/q)\in F_{n_j,n_i}^{n_i,n_j-2}$.

Notice that in our setting $n_i,n_j\ge 3$, $\alpha+\beta=n_i+n_j-2>n_j,n_i$. Hence the first line of \eqref{e3.1}, for each $i,j$, is strong $(p,q)$ if in the region:
\begin{gather*}
    D_{n_j,n_i}^{n_i-1,n_j-1} \cap F_{n_j,n_i}^{n_i,n_j-2} = D_{n_j,n_i}^{n_i-1,n_j-1} \setminus \{q=1\}
\end{gather*}
which can be seen in Figure 1.
\begin{figure}
  \begin{minipage}[t]{0.5\linewidth}
    \centering
        \begin{tikzpicture}[scale=4.0]
        \draw (0,0) -- (1,0) -- (1,1) -- (0,1) -- cycle;
        \filldraw[fill=black!25] (6/16,0)--(1,0)--(1,1)--(19/24,1)--(6/16,12/16)--(6/16,0) -- cycle;
        \draw (0,0) node[below] {$0$}    (6/16,0) node[below] {$\frac{1}{n_j}$} (1,0) node[below] {$1$} (1.1,0) node[below] {$1/p$} (0,12/16) node[left] {$\frac{n_i-1}{n_i}$} (19/24,1) node[above] {$(\frac{2}{n_j},1)$};
        \draw  (0,1) node[left] {$1$} (0,1.1) node[left] {$1/q$};
        \draw [dashed] (0,0) -- (1,1);
        \draw[thick,red] (6/16,0) -- (6/16,12/16);
        \draw[thick,blue] (6/16,12/16) -- (19/24,1);
        \draw[dashed] (19/24,1) -- (1,18/16);
        \draw[thick,red] (19/24,1)--(1,1);
        \draw[thick,blue] (1,1)--(1,0);
        \draw[thick,blue] (1,0)--(6/16,0);
        \draw[dashed] (0,12/16) -- (6/16,12/16);
        \draw[dashed] (1,1) -- (1,18/16);
        \draw (1,18/16) node[right] {$(1,\frac{n_i+n_j-2}{n_i})$};
        \draw (11/16,44/100) node {$D_{n_j,n_i}^{n_i-1,n_j-1}$};
        \end{tikzpicture}
        \caption{Boundedness for $T_3$}
        \label{a}
  \end{minipage}%
  \begin{minipage}[t]{0.5\linewidth}
    \centering
        \begin{tikzpicture}[scale=4.0]
        \draw (0,0) -- (1,0) -- (1,1) -- (0,1) -- cycle;
        \filldraw[fill=black!25] (0,0)--(1,0)--(1,1)--(19/24,1)--(0,12/16)--(0,0) -- cycle;
        \draw (0,0) node[below] {$0$}    (19/24,0) node[below] {$\frac{1}{n_j}$} (1,0) node[below] {$1$} (1.1,0) node[below] {$1/p$} (0,12/16) node[left] {$\frac{n_i-1}{n_i}$};
        \draw  (0,1) node[left] {$1$} (0,1.1) node[left] {$1/q$};
        \draw [dashed] (0,0) -- (1,1);
        \draw[thick,red] (0,0) -- (0,12/16);
        \draw[thick,blue] (0,12/16) -- (19/24,1);
        \draw[dashed] (19/24,1) -- (1,81/76);
        \draw[thick,red] (19/24,1)--(1,1);
        \draw[thick,blue] (1,1)--(1,0);
        \draw[thick,blue] (1,0)--(0,0);
        \draw[dashed] (19/24,0)--(19/24,1);
        \draw[dashed] (1,1) -- (1,81/76);
        \draw (1,81/76) node[right] {$(1,\frac{n_i+n_j-2}{n_i})$};
        \draw (7/16,66/100) node {$F_{n_j,n_i}^{n_i,n_j-2}$};
        \end{tikzpicture}
        \caption{Boundedness for $T_4$}
        \label{b}
  \end{minipage}
\end{figure}
And \eqref{e3.1} is strong $(p,q)$ in the shaded open region A and the blue solid lines except four end points of the red line. Finally, after taking intersection and recalling the discussion about \eqref{e3.0}, the result for $T_3$ follows. And the estimates for $T_4$ is similar but use estimates \eqref{eqT4}. The picture for $T_4$, each $i,j$, is shown in Figure 2.

\end{proof}

\section{Proof of theorem 1.3}
Recall the notion in the last section, we decompose $R_L$ as
\begin{gather*}
    R_L = \frac{2}{\pi}\nabla \int_0^{1}(\Delta+k^2)^{-1}dk = \frac{2}{\pi}\nabla \int_0^{1}\left(G_1(k) + G_2(k) + G_3(k) + G_4(k) \right) dk\\
    := T_1 + T_2 +T_3 +T_4
\end{gather*}

Referring to \cite[Proposition 5.1]{HaSi}, the operator $G_2(k)$ is a family of pseudodifferential operators with order $-2$ thus $\nabla G_2(k)$ are pseudodifferential operators with order $-1$. Then together with the compactness of $K$ we get that $T_2$ is bounded in $L^p$ for all $1\le p\le \infty$. The estimates of $T_4$ term is based on the $L^{p'}(\mathcal{M},d\mu(z');L^{p}(\mathcal{M},d\mu(z)))$ $(\forall 1<p<\infty)$ boundedness of its Schwartz kernel. In specific, according to $\eqref{2.13e}$ we can bound the kernel of $T_4$ by
\begin{equation*}
    O\big(d(z_i^0,z)^{-n_i} \big)
\end{equation*}
for $z'$ lies in some compact set and $z$ goes to infinity in $E_i$. And
\begin{equation*}
    O\big(d(z_j^0,z')^{-n_j}\big)
\end{equation*}
for $z$ lies in some compact set and $z'$ goes to the infinity in $E_j$ and
\begin{equation*}
    O \left( \min \left(d(z_i^0,z)^{-n_i} d(z_j^0,z')^{1-n_j}, d(z_i^0,z)^{1-n_i} d(z_j^0,z')^{-n_j} \right) \right)
\end{equation*}
see \cite[Proposition 4.1]{HaSi}. And then the $L^p$ $(1<p<\infty)$ boundedness of $T_4$ is followed by Minkowski's integral inequality and Hölder's inequality. 
\begin{gather*}
    \|\int_{\mathcal{M}} K(z,z')f(z')d\mu(z')\|_{L^p(\mathcal{M},d\mu(z))} \le \int_{\mathcal{M}} \|K(z,z')\|_{L^p(\mathcal{M},d\mu(z))}|f(z')|d\mu(z')\\
    \le \|K(z,z')\|_{L^{p'}(\mathcal{M},d\mu(z');L^p(\mathcal{M},d\mu(z)))} \|f\|_{L^p(\mathcal{M})}
\end{gather*}
where $K$ denotes the kernel of $T_4$.

Hence we only need to focus on $T_1$ and $T_3$ since the end point estimates for other terms are automatically hold by Theorem \ref{thmLSI} with $r=1$ i.e.
\begin{equation*}
    \|T_j(f)\|_{(n^*,1)} \le C\|f\|_{(n^*,1)} 
\end{equation*}
where $j = 2,4$. 

Therefore it is sufficient to prove the end point estimates for $T_1$ and $T_3$ terms. Which can be expressed as follows
\begin{equation} \label{3.1e}
    T_1(f)(z) = \frac{2}{\pi} \sum_{i=1}^{l} \int_{ \mathcal{M}}f(z') \phi_i(z') \int_0^{1} \nabla_{z}\left( (\Delta_{i}+k^2)^{-1}(z,z') \phi_i(z) \right) dk  d\mu(z')
\end{equation}

\begin{equation}\label{3.2e}
    T_3(f)(z) = \frac{2}{\pi} \sum_{i = 1}^{l} \int_{\mathcal{M}}f(z') \phi_i(z') \int_0^{1}(\Delta_i +k^2)^{-1}(z_i^0, z') \nabla_z(u_i(z,k)) dk d\mu(z')
\end{equation}
where we use $\Delta_i$ to denote $\Delta_{\mathbb{R}^{n_i}\times \mathcal{M}_i}$ for simplicity.

We consider them individually. First for $T_1$, we recall that the support of $\phi_i$ is in the end $E_i$. Therefore, instead of proving the end point estimates of $T_1$, it is sufficient to show the end point result for each term of the sum which is an operator defined on $E_i$ (for $1\le i \le l$)
\begin{equation*}
   \int_{E_i}f(z')\phi_i(z') \int_0^{1} \nabla_{z}\left( (\Delta_{i}+k^2)^{-1}(z,z') \phi_i(z) \right) dk  d\mu_i(z')
\end{equation*}
Next, we expand the gradient part as 
\begin{equation*}
    \nabla_{z}\left( (\Delta_{i}+k^2)^{-1}(z,z') \phi_i(z) \right) =\left( \nabla_{z} (\Delta_{i}+k^2)^{-1}(z,z')\right) \phi_i(z) + \nabla_{z}\left(\phi_i(z) \right) (\Delta_{i}+k^2)^{-1}(z,z')
\end{equation*}
Note that when the gradient hits the resolvent, the first term, its corresponding operator is relatively easy to handle. Since for $1<p<\infty$
\begin{gather}
    \|\phi_i \int_0^{1}\nabla (\Delta_i +k^2)^{-1}dk \phi_i \|_{p\to p} \lesssim \|\int_0^{1}\nabla (\Delta_i +k^2)^{-1}dk \|_{p\to p}\nonumber \\ \label{3.3e}
    = \|(\nabla \Delta_i^{-\frac{1}{2}}) \left(\frac{\pi}{2} - \rm{tan}^{-1} (\sqrt{\Delta_i})   \right)\|_{p\to p} \lesssim \|\nabla \Delta_i^{-\frac{1}{2}} \|_{p\to p}<\infty
\end{gather}
where the last inequality follows from standard results of Riesz transform and the second last inequality follows from \cite{7} or \cite[Lemma 2.2]{HaSi}.

Then we treat the operator where the gradient hits $\phi_i(z)$.

\begin{proposition}\label{prop1}
Let $P$ be the operator defined on the $ith$ end where
\begin{equation*}
    P(f)(z) = \nabla_z \phi_i(z) \int_{E_i}f(z')\phi_i(z') \int_0^{1}(\Delta_i +k^2)^{-1}(z,z')dk d\mu_i(z')
\end{equation*}
Then $P$ is bounded from $L^{n^* ,1}$ to $L^{n^*,1}$ where $n^*  = \min_{k}n_k$.
\end{proposition}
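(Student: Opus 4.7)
The plan is to exploit that $P(f)$ is supported in the compact set $A=\operatorname{supp}(\nabla\phi_i)\subset E_i$, and then to decompose the integrand into a local piece (handled by a Hardy--Littlewood--Sobolev bound on Lorentz spaces) and a far-field piece (handled by a direct Hardy--Littlewood argument). Since $\mu(A)<\infty$ and $P(f)$ vanishes off $A$, for any $q\ge n^*$ the embedding $L^{q,1}(A)\hookrightarrow L^{n^*,1}(A)$ holds with constant at most $\mu(A)^{1/n^*-1/q}$; in particular the proof reduces to estimating $\|P(f)|_A\|_{L^{q_1,1}(A)\cap L^\infty(A)}\lesssim \|f\|_{(n^*,1)}$ for a suitable $q_1\ge n^*$.

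Inserting the resolvent bound \eqref{2.5e} and using $\int_0^1 e^{-ckd}\,dk\le \min(1,1/d)$ yields
\[
|P(f)(z)|\lesssim \int_{E_i}\bigl(d^{2-N}+d^{2-n_i}\bigr)\min(1,d^{-1})\,\phi_i(z')|f(z')|\,d\mu_i(z'),\qquad d=d(z,z').
\]
For $z\in A$ one has $r=d(z_i^0,z)\sim 1$, so $d(z,z')\sim r'=d(z_i^0,z')$ when $r'$ is large; I split the integral at $r'=R$ for a fixed $R$ chosen so that $A\subset\{r'<R\}$. On the far piece ($r'>R$) the kernel is bounded by $Cr'^{1-n_i}$. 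Hardy--Littlewood (Lemma \ref{leHL}) dominates the integral by $\int_0^\infty g^*(t)f^*(t)\,dt$ for $g(z')=r'^{1-n_i}\chi_{r'>R}$, and a direct computation gives $g^*(t)\lesssim \min(R^{1-n_i},t^{-(n_i-1)/n_i})$. Splitting this rearrangement integral at $t_1\sim R^{n_i}$ and using $n^*\le n_i$ together with the elementary inequality $\int_0^{t_1}f^*\le t_1^{1-1/n^*}\|f\|_{(n^*,1)}$ gives a uniform pointwise bound $|P_{\mathrm{far}}(f)(z)|\lesssim \|f\|_{(n^*,1)}$, hence an $L^\infty(A)$, and thus $L^{n^*,1}(A)$, estimate.

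On the local piece ($r'\le R$) the kernel is dominated by $d^{2-N}+O(1)$ in a bounded neighborhood of $A$, so the operator behaves like the Riesz potential of order $2$ acting on $\chi_{r'\le R}\phi_i f$ in the local $N$-dimensional chart. A naive pointwise H\"older--Lorentz pairing of the kernel with $f$ would require $f\in L^{N/2,1}$ locally, which is \emph{not} implied by $f\in L^{n^*,1}$ when $n^*<N/2$; this mismatch between the local dimension $N$ and the endpoint exponent $n^*$ is the main obstacle. The resolution is to invoke the Hardy--Littlewood--Sobolev inequality on Lorentz spaces, by which the Riesz potential of order $2$ maps $L^{n^*,1}$ boundedly into $L^{q_1,1}$ with $1/q_1=\max\{1/n^*-2/N,\,0\}$ (interpreting $q_1=\infty$ at the critical case). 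Composing with the embedding $L^{q_1,1}(A)\hookrightarrow L^{n^*,1}(A)$ absorbs the gap in exponents, yielding the desired $L^{n^*,1}$ bound for the local piece. Summing the two contributions completes the proof.
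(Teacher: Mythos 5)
Your argument is correct and shares the paper's basic architecture: split the kernel into a local piece and a far piece, and handle the far piece pointwise via Lemma \ref{leHL}. Your explicit computation of $g^*$ and the splitting of the rearrangement integral at $t_1\sim R^{n_i}$ is just a hands-on way of evaluating $\|g\|_{L^{(n^*)',\infty}}$, which is exactly what the paper does via the distribution function. Where you genuinely diverge is the local piece. The paper cuts at $d(z,z')\le\epsilon$, observes that the truncated kernel $\chi_D\int_0^1(\Delta_i+k^2)^{-1}\,dk$ has uniformly bounded row and column integrals, and applies the Schur test to get $L^p\to L^p$ for every $1\le p\le\infty$; Theorem \ref{thmLSI} with $r=1$ then delivers the $L^{n^*,1}$ endpoint with no case analysis whatsoever. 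You instead identify the local kernel with a truncated order-$2$ Riesz potential in the $N$-dimensional chart, apply the Lorentz-space form of Hardy--Littlewood--Sobolev to land in $L^{q_1,1}$ with $1/q_1=\max(1/n^*-2/N,0)$, and then absorb the gap via the finite-measure embedding $L^{q_1,1}(A)\hookrightarrow L^{n^*,1}(A)$. Both routes are sound; the paper's Schur test is more elementary and is oblivious to the relation between $n^*$ and $N/2$, while your route stays entirely inside Lorentz-space estimates and makes the dimensional mismatch explicit. One point to tighten: in the case $n^*\ge N/2$ (which does occur, e.g.\ when all $\mathcal{M}_i$ are trivial and $n^*=N$) the O'Neil theorem you invoke requires $q<\infty$, so for $q_1=\infty$ you should argue directly — the truncated kernel $d^{2-N}\chi_{d\lesssim R}$ lies in $L^{(n^*)',\infty}$ precisely because the sup in the quasinorm is taken over a bounded $t$-range, and then the same $L^{(n^*)',\infty}\times L^{n^*,1}$ pairing you use for the far piece gives the $L^\infty(A)$ bound. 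With that one-line repair the proof is complete.
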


\begin{proof}
 Let $\epsilon >0$. Set $D = \{(z,z')\in \mathcal{M}^2 : d(z,z')\le \epsilon\}$. And let $\chi_{D}$ be its characteristic function. Then we write the Schwartz kernel as
 \begin{equation*}
     K(z,z') = \int_0^{1} (\Delta_i +k^2)^{-1}(z,z')dk = \int_0^{1} \chi_D (\Delta_i +k^2)^{-1}(z,z')dk + \int_0^{1} (1-\chi_D) (\Delta_i +k^2)^{-1}(z,z')dk
 \end{equation*}
Denote by $K_1$ the first term, we note that by \eqref{2.5e}
\begin{equation*}
    \sup\limits_{z} \int_{E_i}|K_1(z,z')|d\mu(z') \lesssim \sup\limits_{z} \int_{0}^{\epsilon}r^{1-N} r^{N-1}dr <\infty
\end{equation*}
where $r = d(z,z')$. Similarly, we have
\begin{equation*}
    \sup\limits_{z'} \int_{E_i}|K_1(z,z')|d\mu(z) \lesssim \sup\limits_{z'} \int_{0}^{\epsilon}r^{1-N} r^{N-1}dr <\infty
\end{equation*}
Therefore by Schur test, we have for all $1\le p\le \infty$
\begin{equation*}
    \|\int_0^{1} \chi_D (\Delta_i +k^2)^{-1}dk\|_{p\to p} <\infty
\end{equation*}
an application of Theorem \ref{thmLSI} with $r=1$ gives the endpoint result.

Now for the second term, we consider the operator
\begin{equation*}
    T(f)(z) = \nabla_z \phi_i(z) \int_{E_i}\phi_i(z')f(z')\int_0^{1}(1-\chi_D)(\Delta_i +k^2)^{-1}(z,z')dkd\mu_i(z')
\end{equation*}
By using \eqref{2.5e} and Lemma \ref{leHL}, we have $\forall z\in E_i $,
\begin{gather*}
    |T(f)(z)|\lesssim |\nabla_z \phi_i(z)| \int_{\{z':d(z,z')> \epsilon\}}|\phi_i(z')|  \int_0^{1}|(\Delta_i+k^2)^{-1}(z,z')|dk|f(z')|d\mu_i(z')\\
    \lesssim |\nabla_z \phi_i(z)|\int_{E_i} \left(d(z,z')^{1-n_i}(1-\chi_D) \right) |f(z')|d\mu_i(z')\\
    \lesssim |\nabla_z \phi_i(z)| \int_0^{\infty}g^*(t)f^*(t)dt = |\nabla_z \phi_i(z)| \int_0^{\infty} \left( t^{1/(n^* )'} g^*(t)\right) \left( t^{1/n^* } f^*(t)\right)\frac{dt}{t}\\
    \lesssim |\nabla_z \phi_i(z)| \|f\|_{(n^* ,1)} \sup\limits_{\alpha>0} \alpha d_g(\alpha)^{1/(n^* )'}
\end{gather*}
where $g(z') = d(z,z')^{1-n_i} (1-\chi_D)$.

Recall the notion in Lorentz space, $\forall 0<\alpha< \epsilon^{1-n_i}$ 
\begin{equation*}
    d_g(\alpha) = \mu_i\{z': d(z,z')>\epsilon, d(z,z')^{1-n_i}>\alpha\}\lesssim \alpha^{\frac{-n_i}{n_i-1}}
\end{equation*}
and $d_g(\alpha) = 0$ if $\alpha \ge \epsilon^{1-n_i}$.

Hence 
\begin{equation*}
    \sup\limits_{\alpha>0}\alpha d_g(\alpha)^{1/(n^* )'} \lesssim \sup\limits_{0<\alpha<\epsilon^{1-n_i}} \alpha^{1-\frac{n_i(n^* -1)}{(n_i-1)n^* }} = \epsilon^{1-n_i/n^* } 
\end{equation*}
Therefore, we have $\forall z\in E_i$,
\begin{equation*}
    |T(f)(z)|\lesssim  |\nabla_z \phi_i(z)| \|f\|_{(n^* ,1)}
\end{equation*}
Consequently
\begin{equation*}
    \|T(f)\|_{(n^*,1) } \lesssim \|f\|_{(n^* ,1)}
\end{equation*}
since $\nabla_z \phi_i$ is bounded and compactly supported:
\begin{equation*}
    \|\nabla \phi_i \|_{(n^*,1)} \approx \int_0^\infty d_{\nabla \phi_i}(\alpha)^{1/n^*} d\alpha \le  \mu_i\left(\textrm{supp}(\nabla \phi_i) \right)^{1/n^*} \|\nabla \phi_i\|_\infty<\infty
\end{equation*}
\end{proof}

Combining Proposition \ref{prop1} and \eqref{3.3e}, we have proved that $T_1$ is bounded from $L^{(n^* ,1)}$ to $L^{(n^*,1)}$. Next, we prove that $T_3$ also enjoys this property.

\begin{proposition}\label{prop2}
Let $T_3$ be the operator defined in \eqref{3.2e}. Then there exists some constant $C>0$ such that
\begin{equation*}
    \|T_3(f)\|_{(n^*,1) } \le C \|f\|_{(n^* ,1)}
\end{equation*}
\end{proposition}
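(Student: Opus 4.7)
My plan is to follow the strategy of Proposition \ref{prop1}: establish a pointwise bound $|T_3(f)(z)| \lesssim H(z) \|f\|_{(n^*,1)}$ with $H \in L^{n^*,1}(\mathcal{M})$, and then conclude by monotonicity of the decreasing rearrangement that
\begin{equation*}
\|T_3(f)\|_{(n^*,1)} \lesssim \|H\|_{(n^*,1)} \|f\|_{(n^*,1)}.
\end{equation*}
The starting point is the pointwise kernel estimate \eqref{eestimates} for $T_3$ derived in Section 3.4. Fixing $z$ and applying the Hardy--Littlewood inequality (Lemma \ref{leHL}) together with the Lorentz H\"older bound recorded immediately after it, each $z'$-integral $\int g(z,z')|f(z')|\, d\mu(z')$ appearing in \eqref{eestimates} is dominated by $\|f\|_{(n^*,1)} \, \|g(z,\cdot)\|_{((n^*)',\infty)_{z'}}$, where $(n^*)'=n^*/(n^*-1)$. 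Summing these pieces defines $H$; the task is then to bound $H$ uniformly on each end.

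The core computation concerns the cross-end kernel piece. For fixed $z \in E_i$ and $z' \in E_j$, write $r = d(z_i^0,z)$ and $r' = d(z_j^0,z')$; then $g(z,z') \sim \min\bigl(r^{-n_i}(r')^{2-n_j},\; r^{1-n_i}(r')^{1-n_j}\bigr)$, which is radial and decreasing in $r'$, attaining its maximum $r^{-n_i}$ at $r'=1$. I compute its distribution function in $z'$ by splitting at the crossover $\lambda \sim r^{2-n_i-n_j}$ (one sub-regime for each branch of the minimum); a direct calculation yields
\begin{equation*}
\|g(z,\cdot)\|_{((n^*)',\infty)_{z'}} \lesssim r^{\,2-n_i-n_j/n^*} \le r^{1-n_i},
\end{equation*}
the second inequality using $n_j \ge n^*$. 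The $z' \in K$ piece contributes $r^{-n_i}\|\chi_K\|_{((n^*)',\infty)}\|f\|_{(n^*,1)}$ which is absorbed since $r^{-n_i} \le r^{1-n_i}$, and the $z \in K$ branches of \eqref{eestimates} give a uniformly bounded contribution by the same argument. Hence $H(z) \lesssim r^{1-n_i}$ on each $E_i$ and $H \lesssim 1$ on $K$.

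Finally, I verify $H \in L^{n^*,1}(\mathcal{M})$: compactness of $K$ handles the $K$-piece, while on each $E_i$ a direct computation of the decreasing rearrangement gives $(r^{1-n_i})^*(t) \sim \min(1,\, t^{-(n_i-1)/n_i})$, whence
\begin{equation*}
\|r^{1-n_i}\|_{(n^*,1)(E_i)} \sim \int_0^1 t^{1/n^*-1}\, dt + \int_1^\infty t^{1/n^* + 1/n_i - 2}\, dt < \infty,
\end{equation*}
since $n_i, n^* \ge 3$ forces $1/n^* + 1/n_i \le 2/3 < 1$. The main obstacle I anticipate is the weak-type Lorentz norm computation itself: the two branches of the minimum and the two sub-regimes of the distribution function must be handled uniformly across all pairs $(n_i, n_j)$ with $n_j \ge n^*$, and the resulting rate of decay in $r$ must match exactly $r^{1-n_i}$ at the critical case $n_j = n^*$ --- any weaker decay would fail the borderline integrability criterion, so the computation has essentially no slack.
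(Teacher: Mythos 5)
Your argument is correct and follows essentially the same strategy as the paper's proof: starting from the kernel bound for $T_3$ recorded in \eqref{eestimates} (equivalently derived from Lemma \ref{leKey}), applying the Hardy--Littlewood inequality (Lemma \ref{leHL}) with the Lorentz--H\"older bound in $z'$, computing the weak $L^{(n^*)'}$-norm of the kernel, and then verifying that the resulting decaying profile in $z$ lies in $L^{n^*,1}(\mathcal{M})$; the only cosmetic differences are that you bound the full $\min$-kernel at once where the paper splits it into two pieces $g_1,g_2$, and you relax the decay uniformly to $r^{1-n_i}$ where the paper tracks the $j$-dependent exponent $r^{2-n_j-n_i/n^*}$. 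One small misstatement in your closing remark: since $n^*,n_i\ge 3$ give $1/n^*+1/n_i\le 2/3<1$, the membership $r^{1-n_i}\in L^{n^*,1}(E_i)$ holds with strict inequality rather than at a borderline, so the decay rate actually does have slack---but this has no bearing on the validity of the argument.
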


\begin{proof}
 Let $z_i^0, z_j^0$ be fixed points in $K_i, K_j$ respectively. For simplicity we set 
 \begin{gather*}
     d(z_i^0 , K_i^{c}) = d(z_j^0 , K_j^{c}) =1
 \end{gather*}
Since $T_3$ is defined on whole $\mathcal{M}$. It suffices to investigate the behaviors of operator defined on $\mathcal{M}$
\begin{equation*}
    S(f)(z) = \int_{E_i}\phi_i(z') f(z')\int_0^{1}(\Delta_i +k^2)^{-1}(z_i^0,z') (\nabla_z u_i)(z,k)dk d\mu_i(z')
\end{equation*}
for $1\le i\le l$.

We first consider the case when $z$ is not in the connection $K$. Let $z\in E_j$ where $1\le i,j\le l$. Then by Lemma \ref{leKey} we have
\begin{equation*}
    S(f)(z) \lesssim \int_{E_i} |f(z')|\left(\int_0^{1}d(z_i^0, z')^{2-n_i} d(z_j^0,z)^{1-n_j}e^{-ck(d(z_i^0,z')+d(z_j^0,z))}dk\right)d\mu_i(z')
\end{equation*}
Set $r = d(z_j^0, z)$ and $r'= d(z_i^0, z')$, we have
\begin{gather*}
    |S(f)(z)|\lesssim \int_{E_i}|f(z')| \frac{(r')^{2-n_i} r^{1-n_j}}{r'+r}d\mu_i(z')\\
    \lesssim \int_{E_i}|f(z')| G(z,z')d\mu_i(z')
\end{gather*}
where $G(z,z') = \min\big((r')^{1-n_i}r^{1-n_j}, (r')^{2-n_i}r^{-n_j}\big)$
\begin{equation*}
     =  
    \begin{cases}
    (r')^{1-n_i}r^{1-n_j}, & r' > r\\
     (r')^{2-n_i}r^{-n_j}, & 1 \le r' \le  r
    \end{cases}
\end{equation*}
Therefore
\begin{gather*}
    |S(f)(z)|\lesssim r^{-n_j} \int_{1 \le r' \le  r}(r')^{2-n_i}|f(z')|d\mu_i(z') + r^{1-n_j} \int_{r'> r}(r')^{1-n_i}|f(z')|d\mu_i(z')
\end{gather*}
Now a similar argument in Theorem \ref{thmWE} gives that
\begin{equation*}
    |S(f)(z)|\lesssim \|f\|_{(n^*,1)} \left(r^{-n_j} \sup_{\alpha>0} \alpha d_{g_1}(\alpha)^{1/(n^*)'} + r^{1-n_j} \sup_{\alpha>0} \alpha d_{g_2}(\alpha)^{1/(n^*)'} \right)
\end{equation*}
where $g_1(z') = (r')^{2-n_i} \chi_{1 \le r' \le r}$ and $g_2(z') = (r')^{1-n_i} \chi_{r' > r}$.

Note that $\forall \alpha >0$
\begin{equation}\label{3.5e}
    d_{g_1}(\alpha) \lesssim  
    \begin{cases}
    0, & \alpha \ge 1\\
    \alpha^{\frac{-n_i}{n_i-2}}, & r^{2-n_i} \le  \alpha <1 \\
    r^{n_i}, & 0 < \alpha < r^{2-n_i}
    \end{cases} \quad
        d_{g_2}(\alpha) \lesssim   
    \begin{cases}
    0, & \alpha \ge r^{1-n_i}\\
     \alpha^{\frac{-n_i}{n_i-1}}, & 0<\alpha<r^{1-n_i}
    \end{cases}
\end{equation}
Therefore
\begin{gather*}
    \sup\limits_{\alpha>0}\alpha d_{g_1}(\alpha)^{1/(n^* )'}\lesssim r^{2-\frac{n_i}{n^* }}\quad
    \sup\limits_{\alpha>0}\alpha d_{g_2}(\alpha)^{1/(n^* )'}\lesssim r^{1-\frac{n_i}{n^* }}
\end{gather*}
Consequently, we have $\forall z\in E_j$, $(1\le j\le l)$
\begin{equation*}
    |S(f)(z)| \lesssim \|f\|_{(n^* ,1)} r^{2-n_j- n_i/n^* }
\end{equation*}
Finally if $z\in K$. We simply have
\begin{equation*}
    |S(f)(z)|\lesssim \int_{E_i} |f(z')| (r')^{1-n_i} d\mu_i(z') \lesssim \|f\|_{(n^* ,1)} \sup\limits_{\alpha>0} \alpha d_{g_3}(\alpha)^{1/(n^* )'}
\end{equation*}
where $g_3(z') = (r')^{1-n_i} \chi_{r'\ge  1}$.

A direct calculation gives that
\begin{equation*}
    \sup\limits_{\alpha>0}\alpha d_{g_3}(\alpha)^{1/(n^* )'} \lesssim 1^{-(n_i-n^*)/n^*} = 1
\end{equation*}
Whence 
\begin{gather*}
|S(f)(z)|\lesssim \|f\|_{(n^*,1)} \left(\chi_{K}(z) + \sum_{j=1}^l \chi_{E_j}(z)[d(z,z_j^0)]^{2-n_j-n_i/n^*}\right)
\end{gather*}
Since $K$ is compact we have 
\begin{gather*}
    \|S(f)\|_{(n^*,1)} \lesssim \|f\|_{(n^*,1)} \sum_{j=1}^l \int_0^\infty d_{\chi_{E_j}(z)[d(z,z_j^0)]^{2-n_j-n_i/n^*}}(\alpha)^{1/n^*} d\alpha\\
    \lesssim \|f\|_{(n^*,1)} \sum_{j=1}^l \int_0^1 \alpha^{\frac{n_j}{2n^*-n^* n_j-n_i}}d\alpha \lesssim \|f\|_{(n^*,1)}
\end{gather*}
since $\frac{n_j}{2n^*-n^* n_j-n_i}+1 = \frac{n_j+2n^*-n^* n_j-n_i}{2n^*-n^* n_j-n_i}>0$. Which completes the proof.
\end{proof}

To finish the proof, we need to show the negative part of Theorem \ref{thmDYH}.

\begin{proposition}\label{propUB}
 For any $p>1$, $R$ is not bounded from $L^{n^*,p}\to L^{n^*,p}$.   
\end{proposition}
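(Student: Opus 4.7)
My plan is to argue by contradiction. Suppose $R: L^{n^*, p_0}(\mathcal{M}) \to L^{n^*, p_0}(\mathcal{M})$ is bounded for some $p_0 > 1$, and aim to contradict Theorem \ref{thmHA}, which states that $R$ fails to be bounded on $L^{p}(\mathcal{M})$ for $p \ge n^*$.

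I would split the argument into two regimes. For $p_0 \ge n^*$, the nested property of Lorentz spaces gives $L^{n^*}=L^{n^*,n^*}\hookrightarrow L^{n^*,p_0}\hookrightarrow L^{n^*,\infty}$, so for every $f\in L^{n^*}$ the assumed bound yields
\[
\|Rf\|_{(n^*,\infty)} \le \|Rf\|_{(n^*,p_0)} \lesssim \|f\|_{(n^*,p_0)} \le \|f\|_{n^*}.
\]
When $p_0=n^*$ this already reads $\|Rf\|_{n^*}\lesssim \|f\|_{n^*}$, which directly contradicts Theorem \ref{thmHA}; when $p_0>n^*$ it says $R$ is of weak type $(n^*,n^*)$, contradicting the sharpness arguments of \cite[Proposition 6.1]{HaSi} and \cite[Section 3.7]{Li}.

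For the remaining range $1<p_0<n^*$ the inclusions go the wrong way, and I would instead use real interpolation at a common first index. Combining the assumed boundedness with the already-established $R:L^{n^*,1}\to L^{n^*,1}$ from Theorem \ref{thmDYH}, Hunt's interpolation formula for Lorentz spaces with the same first index,
\[
(L^{n^*,1},\, L^{n^*,p_0})_{\theta,\,n^*} \;=\; L^{n^*,n^*} \;=\; L^{n^*}
\qquad (\theta\in(0,1)),
\]
would then promote $R$ to a bounded operator on $L^{n^*}$, once more contradicting Theorem \ref{thmHA}.

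The main obstacle is justifying the last interpolation step: the Marcinkiewicz statement available in the paper (Theorem \ref{thmLSI}) only applies when the first indices differ, so the above step requires appealing to the external Hunt-type theorem (e.g., Bennett--Sharpley, \emph{Interpolation of Operators}, Theorem 5.3.2). If one prefers to stay self-contained, this step can be replaced by a direct counterexample: taking $f_R(z')=(r')^{-1}\chi_{\{1<r'<R\}}(z')$ on an end $E_i$ with $n_i=n^*$, a rearrangement calculation gives $\|f_R\|_{(n^*,p_0)}\sim(\log R)^{1/p_0}$, while combining the lower resolvent estimate \eqref{2.6e} with the generic lower bound on $|\nabla u_i|$ consistent with \eqref{2.8e} yields $|Rf_R(z)|\gtrsim d(z_j^0,z)^{1-n_j}\log(R/d(z_j^0,z))$ on another end $E_j$ in the regime $1\ll d(z_j^0,z)\ll R$, whence $\|Rf_R\|_{(n^*,p_0)}\gtrsim \log R$ and the ratio $(\log R)^{1-1/p_0}\to\infty$ produces the same contradiction.
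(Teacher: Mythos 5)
Your Plan A has a genuine gap in the range $1<p_0<n^*$, and your citation-based argument for $p_0>n^*$ is circular in the context of this paper.

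\textbf{The interpolation step fails for $1<p_0<n^*$.} Hunt's formula gives $(L^{p,q_0},L^{p,q_1})_{\theta,q}=L^{p,q}$ only under the constraint $1/q=(1-\theta)/q_0+\theta/q_1$. With $q_0=1$ and $q_1=p_0$, as $\theta$ ranges over $(0,1)$ the accessible second index $q$ ranges over $(1,p_0)$. Since $p_0<n^*$, you never reach $q=n^*$, and the identity $(L^{n^*,1},L^{n^*,p_0})_{\theta,n^*}=L^{n^*,n^*}$ that you write is simply false: there is no admissible $\theta$. Interpolating the $L^{n^*,1}$ bound from Theorem \ref{thmDYH} with the hypothetical $L^{n^*,p_0}$ bound only strengthens the conclusion on the \emph{smaller} spaces $L^{n^*,q}$, $1<q<p_0$, and yields no information about $L^{n^*}$.

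\textbf{The $p_0>n^*$ case leans on references that do not apply here.} You derive ``$R$ is weak type $(n^*,n^*)$'' and then invoke \cite[Proposition 6.1]{HaSi} and \cite[Section 3.7]{Li}. But \cite[Proposition 6.1]{HaSi} asserts strong-$L^p$ unboundedness, not the failure of the weak-type estimate, and \cite{Li} treats a different setting (conical manifolds), as the introduction of the present paper explicitly notes. In fact, one of the stated purposes of Proposition \ref{propUB} is precisely to establish that $R$ is not weak type $(n^*,n^*)$ on this class of manifolds, so this piece of your argument is circular.

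\textbf{The fallback counterexample is incomplete.} You need a pointwise \emph{lower} bound on $|Rf_R|$, and the estimate $|\nabla u_i(z,k)|\lesssim d(z_j^0,z)^{1-n_j}e^{-ckd}$ from Lemma \ref{leKey} and \eqref{2.8e} is an upper bound; nothing in the paper's parametrix construction supplies a matching lower bound for $\nabla u_i$, nor does it rule out cancellation among the four pieces $T_1,\dots,T_4$ of $R_L$. The paper finesses exactly this point: it first invokes \cite[Proposition 6.1]{HaSi} to reduce the unboundedness question to a rank-one model $f\mapsto a(z)\int_{E_i} b(z')f(z')\,d\mu_i(z')$ with $a\not\equiv 0$ on some end $E_j$ and $b(z')\approx d(z_i^0,z')^{1-n_i}$ on the critical end $E_i$ ($n_i=n^*$), and then chooses a single, $R$-independent test function $f(z')=d(z_i^0,z')^{-1}(1+|\log d(z_i^0,z')|)^{-\beta}$ with $1/p<\beta\le 1$: one checks $f\in L^{n^*,p}(E_i)$, while $\int_{E_i} b\,f\,d\mu_i=\infty$, so the image is not even finite. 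That construction avoids any interpolation, any dichotomy on $p_0$, and any need for lower bounds on the parametrix gradient.
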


\begin{proof}
By Proposition \ref{propHA} and Theorem \ref{thmLSI}, we know that
\begin{equation*}
    \|R_H(f)\|_{(n^*,p)} \lesssim \|f\|_{(n^*,p)}
\end{equation*}
Therefore, we only need to consider the low energy part, $R_L$. Thanks to \cite[Proposition 6.1]{HaSi}, after a series of simplifications, it suffices to show the operator:
\begin{equation*}
    f \mapsto a(z) \int_{E_i} b(z') f(z') d\mu_i(z')
\end{equation*}
where $a(z)$ is a function compactly supported on an end, say $E_j$, and not identically zero and 
\begin{equation*}
    b(z') = \int_0^1 (\Delta_i+k^2)^{-1}(z_i^0,z') \phi_i(z') dk \approx d(z_i^0,z')^{1-n_i}
\end{equation*}
does not bounded from $L^{n_i,p}$ to $L^{n^*,p}$ for $1\le i\le l$.

We prove it by giving a counterexample. Particularly, we consider function
\begin{equation*}
    f(z') = d(z_i^0,z')^{-1} \left(1+|\log [d(z_i^0,z')]|\right)^{-\beta} \quad 1/p<\beta \le 1
\end{equation*}
It is clear that
\begin{gather*}
    \int_{E_i} b(z') f(z') d\mu_i(z') \approx \int_{E_i} d(z_i^0,z')^{1-n_i} d(z_i^0,z')^{-1} \left(1+|\log [d(z_i^0,z')]|\right)^{-\beta} d\mu_i(z')\\
    \approx \int_1^\infty r^{-1} (1+|\log (r)|)^{-\beta} dr = \int_0^\infty (1+|x|)^{-\beta} dx = \infty
\end{gather*}
sicne $\beta \le 1$.

Hence, we only need to verify that $f\in L^{n_i,p}(E_i)$. Notice that $f$ is non-negative "radial" and decreasing. Set $f_0(r) = r^{-1}(1+|\log (r)|)^{-\beta}$ then we simply have $f(z') = f_0(d(z_i^0,z'))$. Moreover, since
\begin{equation*}
    f^*(t) = \inf \{s>0: d_f(s)\le t\}
\end{equation*}
and
\begin{gather*}
    d_f(s) = \mu_i \{z'\in E_i: |f(z')|>s\} = \mu_i \{z'\in E_i: f_0(d(z_i^0,z'))>s\}\\
    = \mu_i \{z'\in E_i: 1\le d(z_i^0,z')<f_0^{-1}(s)\} \lesssim [f_0^{-1}(s)]^{n_i}
\end{gather*}
Therefore
\begin{gather*}
    f^*(t) \lesssim \inf \{s>0: [f_0^{-1}(s)]^{n_i}\le t\} = \inf \{s>0: s\ge f_0(t^{1/{n_i}})\}\\
    = f_0(t^{1/{n_i}}) = t^{-1/{n_i}} (1+n_i^{-1} |\log (t)|)^{-\beta}
\end{gather*}
Consequently, we have
\begin{gather*}
    \|f\|_{(n_i,p)}^p = \int_0^\infty [t^{1/{n_i}}f^*(t)]^p \frac{dt}{t} \lesssim \int_0^\infty  (1+n_i^{-1} |\log (t)|)^{-\beta p} \frac{dt}{t}\\
    \approx \int_0^\infty (1+|x|)^{-p\beta} dx <\infty
\end{gather*}
since $\beta > 1/p$. As this argument applies to each end, the proof is complete.

\end{proof}

\section*{Acknowledgments} 
I would like to thank my supervisor Adam Sikora for introducing me to the research area discussed in the paper. I also want to thank Andrew Hassell for carefully reading the notes and giving precious suggestions.

\small

\end{document}